\newtheorem{theorem}{Theorem}[section]
\newtheorem{lemma}[theorem]{Lemma}
\newtheorem{proposition}[theorem]{Proposition}
\theoremstyle{definition}
\newtheorem{definition}[theorem]{Definition}
\newtheorem{example}[theorem]{Example}
\theoremstyle{remark}
\newtheorem{remark}[theorem]{Remark}
\newtheorem{notation}[theorem]{Notation}
\newtheorem{property}[theorem]{Property}
\newtheorem{question}[theorem]{Question}
\newtheorem{ex-prop}[theorem]{Example-Proposition}
\numberwithin{equation}{section}
\definecolor{gray}{rgb}{.5,.5,.5}
\definecolor{black}{rgb}{0,0,0}
\definecolor{blue}{rgb}{0,0,1}
\definecolor{red}{rgb}{1,0,0}
\definecolor{green}{rgb}{0,1,0}
\definecolor{yellow}{rgb}{1,1,.4}
\definecolor{purple}{rgb}{1,0,1}
\definecolor{gold}{rgb}{.5,.5,.2}
\definecolor{darkgreen}{rgb}{0,.5,0}
\definecolor{greenbean}{RGB}{199, 237, 204}
\definecolor{RED}{rgb}{1,0,0}
\begin{document}

\title{The Jones polynomials of 3-bridge knots\\ via Chebyshev knots and billiard table diagrams}

\author{Moshe Cohen}
\address{Department of Mathematics, Technion -- Israel Institute of Technology, Haifa 32000, Israel}
\email{mcohen@tx.technion.ac.il}


\begin{abstract}
This work presents formulas for the Kauffman bracket and Jones polynomials of 3-bridge knots using the structure of Chebyshev knots and their billiard table diagrams.  In particular, these give far fewer terms than in the Skein relation expansion.  The subject is introduced by considering the easier case of 2-bridge knots, where some geometric interpretation is provided, as well, via combinatorial tiling problems.
\end{abstract}

\keywords{Tutte polynomial, two-bridge, three-bridge, Fibonacci}
\subjclass[2000]{57M25, 57M27; 05C31, 05B45}

\maketitle

\section{Introduction}
\label{sec:Intro}

The Kauffman bracket and Jones polynomials have been well-studied for the class of 2-bridge or rational knots \cite{DuzShk, GuJo, LeeLeeSeo, Kan1, Kan2, Lew:dis, LicMil, LuZho, Nak1, Nak2, QYAQ, Sto} (see also \cite{Bir:3braid}).  The present work uses a new model to study these polynomials on 2-bridge knots that can naturally be extended to 3-bridge knots, the main goal here.

A (long) knot is a \emph{Chebyshev knot} $T(a,b,c)$ if it admits a one-to-one parametrization of the form $x = T_a(t)$; $y = T_b(t)$; $z = T_c(t + \varphi)$, where $T_n(\cos t')=\cos(nt')$ is the $n$-th Chebyshev polynomial of the first kind, $t\in\mathbb{R}$, $a, b, c\in\mathbb{Z}$, and $\varphi$ is a real constant.

These are generalizations of \emph{harmonic knots} when the third coordinate has no phase shift and when the three parameters are pairwise coprime integers \cite{Comstock, KosPec4, KosPec3, KosPec:Harm}.  
  These are polynomial analogues of the famous Lissajous knots \cite{BDHZ, BHJS, Crom, HosZir, JonPrz, Lam, Lam:dis}.

\renewcommand{\thesubfigure}{}
\begin{figure}[htbp]
\centering
\subfigure[A.    $T(3,7)$.]{
\begin{tikzpicture}[scale=.6]
    \foreach \i in {0,...,7} {
        \draw [very thin,lightgray] (\i,1) -- (\i,4);
    }
    \foreach \i in {1,...,4} {
        \draw [very thin,lightgray] (0,\i) -- (7,\i);
    }
\draw[thick](-.25,.75) -- (3,4) -- (6,1) -- (7,2) -- (5,4) -- (2,1) -- (0,3) -- (1,4) -- (4,1) -- (7,4) -- (7.25,4.25);
\draw[color=white](-.25,-.25) -- (0,0);
\draw[color=white](7,5) -- (7.25,5.25);
\end{tikzpicture}
}
\hspace{2em}
\subfigure[B.    $T(5,7)$.]{
\begin{tikzpicture}[scale=.6]
    \foreach \i in {0,...,7} {
        \draw [very thin,lightgray] (\i,0) -- (\i,5);
    }
    \foreach \i in {0,...,5} {
        \draw [very thin,lightgray] (0,\i) -- (7,\i);
    }
\draw[thick](-.25,-.25) -- (5,5) -- (7,3) -- (4,0) -- (0,4) -- (1,5) -- (6,0) -- (7,1) -- (3,5) -- (0,2) -- (2,0) -- (7.25,5.25);
\end{tikzpicture}
}
\caption{\label{fig:Cheb} Chebyshev knots yielding billiard table diagrams $T(a,b)$.}
\end{figure}
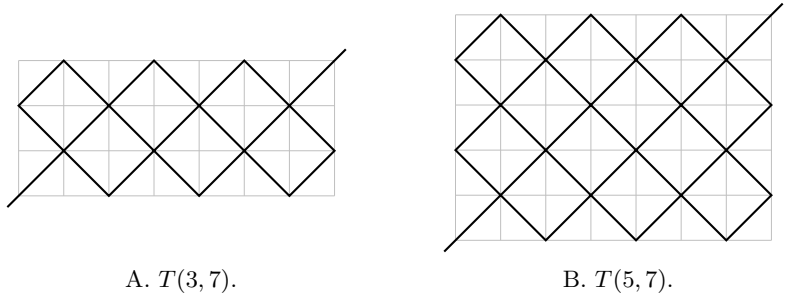

Chebyshev knots and links $T(a,b,c)$ can be projected to $(a-1)\times (b-1)$ billiard table diagrams.  Below we consider both positive and negative crossings at once, and so we omit the third variable, viewing the diagrams without crossings as in Figure \ref{fig:Cheb}, showing $T(3,7)$ and $T(5,7)$.   

Some of these are examples of periodic trajectories on rational polygonal billiards; such billiards can be unfolded into translation surfaces.  Periodic trajectories are studied more generally in dynamical systems and ergodic theory \cite{MasTab:bill, Masur:erg}.

Chebyshev knots appear in several works by Koseleff and Pecker, who in particular show:
\begin{proposition}
\label{thm:bridge}
\cite[Proposition 9 specialized to $m=3$]{KosPec3}
Every 3-bridge knot $K$ 
 has a projection which is a Chebyshev curve $x = T_5(t)$, $y =T_b(t)$, where $b\equiv 2$ (mod $10$).
\end{proposition}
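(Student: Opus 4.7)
The plan is to realize a given 3-bridge knot $K$ as the planar projection of an explicit Chebyshev curve. By definition $K$ is the plat closure of a braid $\beta \in B_6$: six strands joined by three caps at the top and three cups at the bottom, with an arbitrary braid $\beta$ in the middle. My goal is to show that this plat picture is isotopic (as a diagram with over/under data) to the billiard trajectory of $T(5,b)$ for some $b \equiv 2 \pmod{10}$.

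First I would analyze the ``plat skeleton'' of $T(5,b)$. The curve $(T_5(t), T_b(t))$ reaches the horizontal extremes of its bounding rectangle at the preimages under $T_5$ of $\pm 1$, namely $t = \cos(k\pi/5)$ for $k = 0, \ldots, 5$. This produces three bounces on one vertical edge and three on the other, and these bounces play exactly the role of the three cups and three caps of a 6-plat. In particular, $a = 5 = 2(3) - 1$ is the smallest Chebyshev parameter producing the required six endpoint-bounces, which forces the choice $a = 5$.

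Next I would realize the interior braid $\beta$ by the internal crossings of $T(5,b)$. Each ``row'' of the $4 \times (b-1)$ billiard rectangle contributes a short word in the generators $\sigma_1, \ldots, \sigma_5$ of $B_6$, with over/under data at each crossing controlled by the phase $\varphi$ (which in the 2D picture amounts to a sign choice at each lattice crossing). Lengthening $b$ accumulates arbitrarily long words, and by inserting compensating pairs of crossings that cancel by Reidemeister~II, one checks that any $\beta \in B_6$ is isotopic to a subword appearing in some $T(5,b)$. The congruence $b \equiv 2 \pmod{10}$ then arises from two separate constraints: $\gcd(5,b) = 1$ is required so that the trajectory is a single-component knot rather than a link (ruling out $b \equiv 0, 5 \pmod{10}$), and a phase-alignment between the extrema of $T_5$ and $T_b$, together with a parity of $b$ matching the orientation of the long knot's endpoints, pins down the residue class to exactly $2 \pmod{10}$.

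The main obstacle I expect is the density claim in the middle step: showing that every 3-bridge braid $\beta$ can in fact be encoded as the interior of some billiard trajectory with $b$ in the prescribed congruence class. One direction, that some billiard diagrams realize 3-bridge knots, is immediate from the plat skeleton. The converse requires either an explicit constructive algorithm for encoding an arbitrary $\beta$ into a billiard word on the $5$-column structure, or an induction on word length (or crossing number) that simultaneously respects the arithmetic constraint on $b$. This combinatorial realization step is where the genuine substance of the proof lies, and I would expect this is the core of the argument in \cite{KosPec3}.
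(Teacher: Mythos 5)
This proposition is not proved in the paper at all: it is quoted verbatim from Koseleff--Pecker \cite{KosPec3} (their Proposition 9 specialized to $m=3$), so there is no in-paper argument to compare yours against. Judged on its own terms, your outline correctly identifies the right framework --- a 3-bridge knot presented as a $6$-plat, the extrema of $T_5$ supplying the three caps and three cups, and the interior crossings of the $4\times(b-1)$ billiard table supplying the braid --- and this is indeed the general shape of the argument in the cited reference.

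However, the central step is asserted rather than proved, and the mechanism you propose for it would fail as stated. In a billiard/Chebyshev diagram the positions of the crossings and the underlying strand-permutation pattern are completely rigid once $a=5$ and $b$ are fixed; the only freedom is the over/under sign at each of the $2(b-1)$ predetermined crossings (controlled by the phase $\varphi$). You therefore cannot ``insert compensating pairs of crossings that cancel by Reidemeister II'' to build an arbitrary word in $B_6$: you must instead show that the one fixed template word $\sigma_{i_1}^{\epsilon_1}\cdots\sigma_{i_N}^{\epsilon_N}$, with the indices $i_j$ dictated by the billiard geometry and only the signs $\epsilon_j$ at your disposal, realizes every $6$-plat for suitable $b$ in the given congruence class. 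That requires a genuine realization lemma: appropriate sign choices must render prescribed crossings removable so that arbitrary subwords can be extracted from the rigid template, and one must simultaneously control how the trajectory's endpoint identifications produce the correct plat closure. This is exactly where the content of Koseleff--Pecker's proof lies, and your sketch leaves it open (as you acknowledge). Relatedly, your derivation of the congruence $b\equiv 2\pmod{10}$ is too coarse: coprimality with $5$ only excludes multiples of $5$, and ``parity plus phase alignment'' does not by itself pin down a single residue class mod $10$; the class is forced by which corner-to-corner identifications the rectangular billiard trajectory actually performs, i.e., by requiring that the closure of the template be a plat closure of the right combinatorial type.
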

They use the general case of Proposition 9 to show their main Theorem 3 that every knot has a projection that is a Chebyshev plane curve.  
They study Chebyshev diagrams for 2-bridge knots in \cite{KosPec4}, leading them to connections with Fibonacci polynomials when considering the Alexander polynomial in \cite{PVK:fib}.

This is far from surprising based on the combinatorial literature.  In \cite{CoDaRu}, the author with Dasbach and Russell describes the determinant expansion of the Alexander polynomial of a knot in terms of perfect matchings of its \emph{balanced overlaid Tait graph} or so-called ``dimer graph.''  When this graph is a subgraph of a grid graph, this setting can be translated into the problem of domino tilings of a game board.  The problem of counting domino tilings of a $2\times n$ game board is famously known to give the Fibonacci numbers.

In previous work by the author \cite{Co:clock} with Teicher, it is remarked that the balanced overlaid Tait graph of a Chebyshev knot's billiard table diagram is in fact a grid graph.  These objects appear often in combinatorial literature, and so there is hope that using this model gives access to results that will not hold for general knot diagrams.

Domino tilings of rectangles have also been well-studied (\cite{KlaPol, Rea, Sta:rect, BHS} to name a few).   For a more detailed review of the combinatorial background material, see the text on Matching Theory by Lov{\'a}sz and Plummer \cite{LovPlum} and lecture notes on dimers by Kenyon \cite{Ken}.

\medskip

\textbf{Results.  }  The tuple Notation \ref{note:tuple} allows the computation of invariants for whole classes of Chebyshev knots without specifying crossing information, as each term in the tuple corresponds with a crossing, ordered from bottom to top in each column and then from left to right on the columns.  The term $A^\pm$ in the $i$-th position, for example, instructs one to include in the product the term $A$ (or $A^{-1}$) if the $i$-th crossing is positive (or negative, respectively).

\medskip

\noindent
\textbf{Theorem \ref{thm:main}.}
\textit{
(Main Theorem) Following Notation \ref{note:thm} below, the Kauffman bracket polynomials of $T(5,b)$ (for indeterminate crossing information), including the class of 3-bridge knots, are
\begin{align*}
h_b= {} & \sum_{i=3}^{b-1}([h_3,P_{i-2}]+[h_2,P_{i-1}]+[Q_i],\mathcal{P}_{b-1-i})
\end{align*}
for $b\geq 4$ with base cases $h_1=1$, $h_2=(A^\pm,A^\pm)+\delta(A^\pm,A^\mp)+\delta(A^\mp,A^\pm)+\delta^2(A^\mp,A^\mp)$,  and $h_3=(h_2,A^\pm,A^\pm)+(f_2^\mp,f_2^\mp,A^\mp,A^\mp)+(g_2,A^\pm,A^\mp)+(f_2^\mp,f_2^\pm,A^\mp,A^\pm)$, where $\delta=(-A^2-A^{-2})$.
}

\medskip

\textbf{Organization. }  This paper is organized as follows:  Section \ref{sec:Background} contains some brief background material on the Jones polynomial.  Section \ref{sec:Fibonacci} considers the $a=3$ case, yielding 2-bridge knots, and its connections with Fibonacci numbers.  Section \ref{sec:4} considers the $a=4$ case in brief so that some of its results can be applied to the next case.  Section \ref{sec:5} considers the $a=5$ case with 3-bridge knots, giving small examples before proving the main results in Section \ref{subsec:results}.

For 2-bridge knots, note Theorem \ref{thm:3rec} and Subsection \ref{subsec:3geom}, where the result is simplified geometrically.  Proposition \ref{prop:OEIS}
 counts Padovan number $P(b-4)$ terms in this expansion, even fewer than the Fibonacci number $F(b-1)$ terms in the spanning tree model!

For 3-bridge knots, note the Main Theorem \ref{thm:main} above.  Proposition \ref{prop:exponential} counts $2^{b-4}$ terms in the sum over all $\mathcal{P}_j$, 
 far fewer than expected for $2(b-1)$ crossings!

Recursive formulas for the writhe are given in Section \ref{sec:writhe} to obtain Jones polynomials, although perhaps it is enough to have the list of coefficients of the Kauffman bracket polynomials.

\medskip

\textbf{Acknowledgements. }  This work arose based explicitly on three week-long visits with Pierre-Vincent Koseleff:  in June 2012 and June 2013 at the Universit\'e Pierre \& Marie Curie (Paris 6) funded by INRIA-Rocquencourt Ouragan and ANR 
 Structures G\'{e}om\'{e}triques Triangul\'{e}es
; and in May 2014 at the Technion, when he spoke about \cite{PVK:fib}.  Thanks also go to Misha Polyak.

\medskip

We explain here the notation used in the Main Theorem, although these ideas are developed more slowly in Sections \ref{sec:Fibonacci}, \ref{sec:4}, and \ref{sec:5}.

\noindent
\textbf{Notation \ref{note:thm}.}
Let $\square^i$ be $[\square,\square,\ldots,\square]$ written $i$ times.  Let $f_2^\pm=-A^{\mp3}$ and $g_2$ be the Kauffman bracket polynomials of $T(3,2)$ and $T(4,2)$, respectively.  Let $X=\delta[A^\pm,A^\pm]+[A^\pm,A^\mp]+[A^\mp,A^\pm]$.

Let $P'_1=\widetilde{P}'_1=[A^\pm,A^\pm]$, and the remaining $P'_i$ and $\widetilde{P}'_i$ be as follows: \\
$P'_2=[f_2^\mp,f_2^\mp,A^\mp,A^\mp]+[A^\pm,f_2^\mp,A^\pm,A^\mp]+\delta[A^\pm,A^\pm,A^\mp,A^\pm]+[A^\pm,A^\mp,A^\mp,A^\pm]+[A^\mp,A^\pm,A^\mp,A^\pm]$,\\
$\widetilde{P}'_2=[f_2^\mp,f_2^\mp,A^\mp,A^\mp] +[f_2^\mp,A^\pm,A^\mp,A^\pm] +\delta[A^\pm,A^\pm,A^\pm,A^\mp]+[A^\mp,A^\pm,A^\pm,A^\mp]+[A^\pm,A^\mp,A^\pm,A^\mp]$, 

\begin{equation*}
P'_i=
\begin{cases}
[f_2^\mp,A^\pm,A^\mp,A^\mp,[f_2^\mp,A^\mp,A^\mp,A^\mp]^{j},A^\pm,A^\mp] & \\
\mbox{\hspace{5mm}} + [A^\pm,[f_2^\mp,f_2^\mp,A^\mp,A^\mp]^{j+1},A^\pm] &\mbox{ for $i=2j+3$ odd and }\\
[A^\pm,[f_2^\mp,f_2^\mp,A^\mp,A^\mp]^{j},f_2^\mp,A^\pm,A^\mp] & \\
\mbox{\hspace{5mm}} + [X,[f_2^\mp,A^\mp,A^\mp,A^\mp]^{j},A^\mp,A^\pm] &\mbox{ for $i=2j+2$ even. }
\end{cases}
\end{equation*}

\begin{equation*}
\widetilde{P}'_i=
\begin{cases}
[f_2^\mp,A^\pm,A^\mp,[f_2^\mp,f_2^\mp,A^\mp,A^\mp]^{j},f_2^\mp,A^\pm,A^\mp] & \\
\mbox{\hspace{5mm}} + [A^\pm,f_2^\mp,A^\mp,A^\mp,[A^\mp,f_2^\mp,A^\mp,A^\mp]^{j},A^\mp,A^\pm] &\mbox{ for $i=2j+3$ odd and }\\
[f_2^\mp,A^\pm,A^\mp,[f_2^\mp,f_2^\mp,A^\mp,A^\mp]^{j},A^\pm] & \\
\mbox{\hspace{5mm}} + [X,[f_2^\mp,A^\mp,A^\mp,A^\mp]^{j},A^\pm,A^\mp] &\mbox{ for $i=2j+2$ even. }
\end{cases}
\end{equation*}

The blocks $P'_i$ and $\widetilde{P}'_i$ correspond to elements of size $i$ in $\mathcal{P}_n$, the set of all partitions of the integer $n$.  
  Let $j$ be the position in $n$ of the first part of one of these elements.  For convenience of notation we set 
\begin{equation*}
P_i:=P_i(j)=
\begin{cases}
P'_i  &\mbox{ if $i+j$ is odd and }\\
\widetilde{P}'_i  &\mbox{ if $i+j$ is even. }
\end{cases}
\end{equation*}

For $i\geq 3$, let $Q_i$ be as follows:
\begin{equation*}
Q_i=
\begin{cases}
[f_2^\mp,f_2^\pm,A^\mp,[f_2^\mp,f_2^\mp,A^\mp,A^\mp]^{j},f_2^\mp,A^\pm,A^\mp] &\\
\mbox{\hspace{5mm}} +[f_2^\pm,f_2^\mp,A^\mp,A^\mp,[A^\mp,f_2^\mp,A^\mp,A^\mp]^{j},A^\mp,A^\pm] &\mbox{ for $i=2j+3$ odd and }\\
[f_2^\mp,f_2^\pm,A^\mp,[f_2^\mp,f_2^\mp,A^\mp,A^\mp]^{j},A^\pm] & \\
\mbox{\hspace{5mm}} + [g_2,[f_2^\mp,A^\mp,A^\mp,A^\mp]^{j},A^\pm,A^\mp] &\mbox{ for $i=2j+2$ even. }
\end{cases}
\end{equation*}

\section{The Jones polynomial}
\label{sec:Background}

For the sake of completion, we provide the necessary definitions due to Kauffman \cite{KauffmanBracket}.  Note also that Thistlethwaite \cite{Th} gives a spanning tree expansion; Dasbach, Futer, Kalfagianni, Lin, and Stoltzfus \cite{DaFuKaLiSt} compute these via associated ribbon graphs; and the author computes these of pretzel knots in \cite{Co:jones} as matrix determinants.

Given an unoriented crossing depicted locally in a link diagram $L$, let $L_0$ and $L_\infty$ be the two smoothings, also called the $A$- and the $B$-smoothings, as in Figure \ref{fig:Jones}A.

\renewcommand{\thesubfigure}{}
\begin{figure}[htbp]
\centering
\subfigure[A.  An unoriented crossing and the two smoothings.]{
\begin{tikzpicture}
\draw[loosely dashed] (.5,.5) circle (.7cm);
\draw[-] (0,0) -- (1,1);
\draw (1,0) -- (.55,.45);
\draw[-] (0,1) -- (.45,.55);
\draw[loosely dashed] (2.5,.5) circle (.7cm);
\draw[-] (2,0) arc (-45:45:.7cm);
\draw[-] (3,1) arc (135:225:.7cm);
\draw[loosely dashed] (4.5,.5) circle (.7cm);
\draw[-] (5,0) arc (45:135:.7cm);
\draw[-] (4,1) arc (225:315:.7cm);
\end{tikzpicture}
}
\hspace{2em}
\subfigure[B.  Positive and negative crossings contribute $+1$ and $-1$ to the writhe, respectively.]{
\begin{tikzpicture}
\draw[loosely dashed] (.5,.5) circle (.7cm);
\draw[->] (0,0) -- (1,1);
\draw (1,0) -- (.55,.45);
\draw[<-] (0,1) -- (.45,.55);
\draw (-1.5,.5) node {positive};
\draw (4.5,.5) node {negative};
\draw[loosely dashed] (2.5,.5) circle (.7cm);
\draw[<-] (2,1) -- (3,0);
\draw (2,0) -- (2.45,.45);
\draw[<-] (3,1) -- (2.55,.55);
\end{tikzpicture}
}
\caption{\label{fig:Jones} Resolutions and writhe needed to define the Jones polynomial.}
\end{figure}
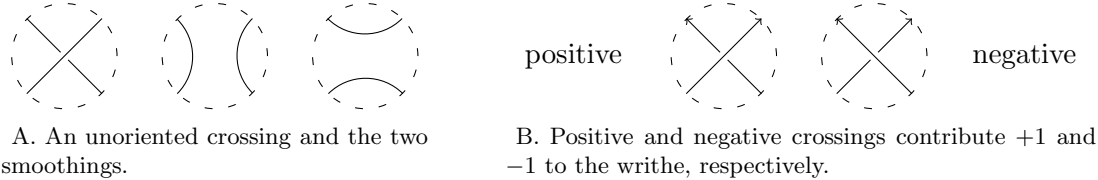

The \emph{Kauffman bracket polynomial $\langle L\rangle$} 
 of a link $L$ can be defined by
\begin{enumerate}
	\item Smoothing relation:  $\langle L \rangle =A \langle L_0 \rangle + A^{-1} \langle L_\infty \rangle$
	\item Stabilization:  $\langle U\sqcup L\rangle=\delta \langle L\rangle$
	\item Normalization:  $\langle U\rangle=1$
\end{enumerate}
where $U$ is the unknot, $\sqcup$ is the disjoint union, and $\delta=(-A^2-A^{-2})$.

The \emph{writhe} $w(D)$ of an oriented diagram is the sum over all crossings of the evaluation $+1$ for positive crossings and $-1$ for negative crossings as in Figure \ref{fig:Jones}B.

The \emph{Jones polynomial $V_L(t)$} of a link $L$ given an oriented diagram $D$ can be defined via the Kauffman bracket polynomial (along with the substitution $A=t^{-1/4}$) by
$$V_L(t)=(-A^{-3})^{w(D)}\langle L \rangle.$$

Note that the Kauffman bracket polynomial is a specialization of the \emph{signed Tutte polynomial} on the signed plane graph obtained from a knot diagram by its checkerboard coloring.

\section{The height-3 case and Fibonacci polynomials}
\label{sec:Fibonacci}

In this section we consider Chebyshev knots $T(3,b)$, including the class of 2-bridge knots, and their Kauffman bracket polynomials, denoted $f_b$.

\subsection{Small examples}
\label{subsec:3ex}
We begin with some base cases to introduce some notation and ideas that will be useful in the main Theorem \ref{thm:3rec} of this section below.

The reader familiar with the Jones polynomial may want to skip to this Theorem, but pay attention to Notation \ref{note:tuple} and the notation used in the end of Example \ref{ex:T35}.

\begin{example}
\label{ex:T31-2}
The knot $T(3,1,\emptyset)$ is the unknot and thus has Jones polynonial equal to 1.  The Kauffman bracket polynomial, which we denote by $f_1$ is also equal to 1.

Both knots $T(3,2,\pm)$ are the unknot with a single Reidemeister I move and thus have Jones polynonial equal to 1.  The Kauffman bracket polynomial, which we denote by $f_2^\pm$, depending on whether the crossing is positive or negative, respectively, is $A+A^{-1}\delta$ or $A^{-1}+A\delta$, and so:
\begin{equation}
\label{eq:f2}
f_2^\pm=-A^{\mp3}.
\end{equation}
\end{example}

\begin{notation}
\label{note:tuple}
Beginning in the next example, we introduce the following $n$-tuple notation for $T(3,n+1,c)$ knots with $n$ crossings so that we may consider them without determining the signs:  the $i$-th coordinate of the tuple gives the factor determined by the sign of the $i$-th crossing, as ordered from left to right.  These products are then summed over all possible resolutions to give the Kauffman bracket polynomial.
\end{notation}

We use this ordering on the crossings to perform resolutions starting from the end.

\begin{example}
\label{ex:T33}
Consider the link $T(3,3,\pm\pm)$, where the two crossings are independent of each other.  The Kauffman bracket polynomial is
\begin{equation}
\label{eq:f3}
f_3=(f_2^\pm,A^\pm)+(f_2^\mp,A^\mp).
\end{equation}
We note, for the sake of the recursion that will follow, that we get $(\cdot,\cdot,A^\pm)+(\cdot,f_2^\mp,A^\mp)$ in general.

Since $T(3,3,++)=T(3,3,--)$ is the unlink of two unknotted components and $T(3,3,+-)=T(3,3,-+)$ in the Hopf link, the four possibilities give only two.  These are, respectively,
$$-A^{-3}A^{}-A^{3}A^{-1}=-A^{-2}-A^{2}=\delta \quad \text{ and } \quad -A^{-3}A^{-1}-A^{3}A^{}=-A^{-4}-A^{4}.$$
\end{example}

\begin{example}
\label{ex:T34}
Consider the link $T(3,4,\pm\pm\pm)$, where the three crossings are independent of each other.  The Kauffman bracket polynomial is
\begin{equation}
\label{eq:f4}
f_4=(f_3,A^\pm)+(f_2^\pm,f_2^\mp,A^\mp)=(f_2^\pm,A^\pm,A^\pm)+(f_2^\mp,A^\mp,A^\pm)+(f_2^\pm,f_2^\mp,A^\mp).
\end{equation}
The link $T(3,4,+-+)$ is the trefoil.  The Kauffman bracket polynomial, denoted by $f_4^{+-+}$, is 
$$Af_3^{+-}+A^{-1}(f_2^+)f_2^+=(-A^4-A^{-4})A+(-A^{-3})(-A^{-3})A^{-1}=-A^5-A^{-3}+A^{-7}.$$
  With a writhe of $3$, this gives a Jones polynomial of $A^{-4}+A^{-12}-A^{-12}=t+t^3-t^4$, as confirmed by the Knot Atlas \cite{knotatlas}.
\end{example}

\begin{example}
\label{ex:T35}
Consider the link $T(3,5,\pm\pm\pm\pm)$, where the four crossings are independent of each other.  The Kauffman bracket polynomial is
\begin{equation}
\label{eq:f5}
f_5=(f_4,A^\pm)+(f_3,f_2^\mp,A^\mp)
\end{equation}
\begin{align*}
&= (f_2^\pm,A^\pm,A^\pm,A^\pm)+(f_2^\mp,A^\mp,A^\pm,A^\pm)+(f_2^\pm,f_2^\mp,A^\mp,A^\pm)+(f_2^\pm,A^\pm,f_2^\mp,A^\mp)+(f_2^\mp,A^\mp,f_2^\mp,A^\mp) \\
&= (f_2^\pm,A^\pm,A^\pm,A^\pm)+(f_2^\pm,A^\pm,f_2^\mp,A^\mp)+(f_2^\mp,A^\mp,A^\pm,A^\pm)+(f_2^\mp,A^\mp,f_2^\mp,A^\mp)+(f_2^\pm,f_2^\mp,A^\mp,A^\pm) \\
&= ([f_2^\pm,A^\pm]+[f_2^\mp,A^\mp],[A^\pm,A^\pm]+[f_2^\mp,A^\mp])+(f_2^\pm,f_2^\mp,A^\mp,A^\pm) \\
&= (f_3,C)+(f_2^\pm,f_2^\mp,A^\mp,A^\pm),
\end{align*}
where we set $C=[A^\pm,A^\pm]+[f_2^\mp,A^\mp]$ and from Example \ref{ex:T33} above $f_3=[f_2^\pm,A^\pm]+[f_2^\mp,A^\mp]$.
\end{example}

We now have access to the notation that will make the following main Theorem of this section meaningful.

\begin{theorem}
\label{thm:3rec}
Recall that $C=[A^\pm,A^\pm]+[f_2^\mp,A^\mp]$ and substitute $C$ starting from the left as often as possible. The Kauffman bracket polynomials $f_b$ of $T(3,b)$, including the class of 2-bridge knots, obey the following recursion rules.  If a summand in $f_{b-1}$ ends in:
\begin{itemize}
	\item[($A$)] $(...,A^\pm)$, then it appears as a summand in $f_b$ with a $C$ replacing the $A^\pm$.
	\item[($f$)] $(...,[f_2^\mp,A^\mp])$, then it appears as a summand in $f_b$ ending with $A^\pm$.
	\item[($C$)] $(...,C)$, then it appears as two summands in $f_b$ with $[C,A^\pm]+[A^\pm,f_2^\mp,A^\mp]$ replacing the $C$.
\end{itemize}
In particular, here is a list of Kauffman brackets for this class:
\begin{align*}
f_3= {} & (f_2^\pm,A^\pm)+(f_2^\mp,A^\mp) \\
f_4= {} & (f_3,A^\pm)+(f_2^\pm,f_2^\mp,A^\mp) \\
f_5= {} & (f_3,C)+(f_2^\pm,f_2^\mp,A^\mp,A^\pm) \\
f_6= {} & (f_3,[C,A^\pm]+[A^\pm,f_2^\mp,A^\mp])+(f_2^\pm,f_2^\mp,A^\mp,C) \\
f_7= {} & (f_3,C,C)+(f_3,A^\pm,f_2^\mp,A^\mp,A^\pm)+(f_2^\pm,f_2^\mp,A^\mp,[C,A^\pm]+[A^\pm,f_2^\mp,A^\mp]) \\
f_8= {} & (f_3,C,[C,A^\pm]+[A^\pm,f_2^\mp,A^\mp])+(f_3,A^\pm,f_2^\mp,A^\mp,C) +(f_2^\pm,f_2^\mp,A^\mp,C,C)+(f_2^\pm,f_2^\mp,A^\mp,A^\pm,f_2^\mp,A^\mp,A^\pm)\\
f_9= {} & (f_3,C,C,C)+(f_3,C,A^\pm,f_2^\mp,A^\mp,A^\pm)+(f_3,A^\pm,f_2^\mp,A^\mp,[C,A^\pm]+[A^\pm,f_2^\mp,A^\mp]) \\
& +(f_2^\pm,f_2^\mp,A^\mp,C,[C,A^\pm]+[A^\pm,f_2^\mp,A^\mp]) +(f_2^\pm,f_2^\mp,A^\mp,A^\pm,f_2^\mp,A^\mp,C)\\
f_{10}= {} & (f_3,C,C,[C,A^\pm]+[A^\pm,f_2^\mp,A^\mp])+(f_3,C,A^\pm,f_2^\mp,A^\mp,C) +(f_3,A^\pm,f_2^\mp,A^\mp,C,C)\\
& +(f_3,A^\pm,f_2^\mp,A^\mp,A^\pm,f_2^\mp,A^\mp,A^\pm) +(f_2^\pm,f_2^\mp,A^\mp,C,C,C) +(f_2^\pm,f_2^\mp,A^\mp,C,A^\pm,f_2^\mp,A^\mp,A^\pm)\\
& +(f_2^\pm,f_2^\mp,A^\mp,A^\pm,f_2^\mp,A^\mp,[C,A^\pm]+[A^\pm,f_2^\mp,A^\mp]) 
\end{align*}
\end{theorem}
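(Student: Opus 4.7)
The plan is to prove the theorem by induction on $b$, using an underlying skein recursion together with the combination identity $[A^\pm,A^\pm]+[f_2^\mp,A^\mp]=C$.

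First I would establish the basic recursion
\[
f_b = (f_{b-1},A^\pm) + (f_{b-2},f_2^\mp,A^\mp)
\]
by resolving the last crossing of the $T(3,b)$ billiard-table diagram. The $A$-smoothing contributes $A^\pm$ and reduces the diagram to $T(3,b-1)$ up to planar isotopy, while the $B$-smoothing contributes $A^\mp$ and converts the adjacent crossing into a Reidemeister I kink whose bracket is $f_2^\mp=-A^{\pm 3}$ (by Example \ref{ex:T31-2}), leaving a $T(3,b-2)$ pattern. Examples \ref{ex:T33}--\ref{ex:T35} verify this at low $b$, and the local move is identical for all $b\geq 3$ because the top row of the two-row Chebyshev diagram has the same shape for every $b$.

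For the inductive step, suppose $f_{b-1}$ has been written in the compact form $S_A+S_f+S_C$ (summands ending in $A^\pm$, $[f_2^\mp,A^\mp]$, or $C$ respectively), and let $\mathcal{R}(f_{b-1})$ denote the result of applying rules (A), (f), (C). Expanding $C$ inside the image of rule (A) turns the replacement $A^\pm\mapsto C$ into the pair $A^\pm\mapsto(A^\pm,A^\pm)+(f_2^\mp,A^\mp)$, giving
\[
\mathcal{R}(f_{b-1}) = (f_{b-1},A^\pm) + (S_A/A^\pm,\,f_2^\mp,A^\mp) + (S_C/C,\,A^\pm,f_2^\mp,A^\mp),
\]
where $S_A/A^\pm$ (resp.\ $S_C/C$) denotes the sum of pre-$A^\pm$ (resp.\ pre-$C$) prefixes of the summands. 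By the basic recursion, proving $\mathcal{R}(f_{b-1})=f_b$ thus reduces to the identity $(S_A/A^\pm)+(S_C/C,A^\pm)=f_{b-2}$. Using the inductive description of how the rules produced $f_{b-1}$ from $f_{b-2}=S_A'+S_f'+S_C'$---the $A^\pm$-ending summands of $f_{b-1}$ are either rule (f) images of $S_f'$ or first-half images of rule (C) applied to $S_C'$, while the $C$-ending ones are rule (A) images of $S_A'$---stripping the final block yields $S_A/A^\pm=S_f'+S_C'$ and $(S_C/C,A^\pm)=S_A'$, whose sum is $f_{b-2}$. The base case is Example \ref{ex:T35}, which shows that $f_5=(f_3,C)+(f_2^\pm,f_2^\mp,A^\mp,A^\pm)$ already has the three-block form; iterating the rules then produces the explicit list $f_6,\ldots,f_{10}$.

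The main obstacle is the provenance bookkeeping underlying $(S_A/A^\pm)+(S_C/C,A^\pm)=f_{b-2}$: the induction must carry as part of its hypothesis not only that $f_{b-1}$ has the three-block form but also that each block arose by strict recursive application of the rules from the base case, so that every summand in $S_A$ and $S_C$ can be traced unambiguously to its source in $f_{b-2}$. Once this strengthened hypothesis is in place, the algebra collapses to the single expansion $(T,C)=(T,A^\pm,A^\pm)+(T,f_2^\mp,A^\mp)$ already visible in Example \ref{ex:T35}.
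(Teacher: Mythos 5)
Your proposal is correct and follows essentially the same route as the paper: both rest on the two-term skein recursion $f_b=(f_{b-1},A^\pm)+(f_{b-2},f_2^\mp,A^\mp)$ obtained by resolving the last crossing, followed by the regrouping $[A^\pm,A^\pm]+[f_2^\mp,A^\mp]=C$ (the paper proves (A) and (f) by this expansion and derives (C) from them). Your explicit prefix identity $(S_A/A^\pm)+(S_C/C,A^\pm)=f_{b-2}$, with the provenance-tracking hypothesis, just makes precise a bookkeeping step that the paper's proof leaves implicit when it identifies the $A^\pm$-ending and $[f_2^\mp,A^\mp]$-ending terms of $f_b$ with $(f_{b-1},A^\pm)$ and $(f_{b-2},f_2^\mp,A^\mp)$.
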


\begin{remark}
\label{rem:implications}
Implications of this recursion include:
\begin{itemize}
	\item This sum has far fewer terms than the $2^{b-1}$ terms using the Skein relation for the Kauffman bracket polynomial.
	\item This sum has fewer terms than the $F(b-1)$ (Fibonacci number) terms in the domino tiling interpretation given in the Subsection \ref{subsec:3geom}, corresponding to the spanning tree expansion of the Jones polynomial, as in \cite{Co:jones}.
	\item This recursion is of only one level, unlike the Fibonacci recursion.
\end{itemize}
\end{remark}

\begin{proof}
Observe that $(C)$ follows from $(A)$ and $(f)$:  a term in $h_b$ ending in $C$ is really two terms, one ending in $[A^\pm,A^\pm]$ and the other ending in $[f_2^\mp,A^\mp]$.  By $(A)$ and $(f)$ these produce two new terms ending in $[A^\pm,C]$ and $[f_2^\mp,A^\mp,A^\pm]$; however these are really three terms:  ending in $[A^\pm,A^\pm,A^\pm]$, $[A^\pm,f_2^\mp,A^\mp]$, and $[f_2^\mp,A^\mp,A^\pm]$.  These can be regrouped to achieve the result in $(C)$.

We prove $(A)$ and $(f)$ by induction.  The base cases are handled in Examples 
 \ref{ex:T34} and \ref{ex:T35}.  Suppose that the induction hypothesis holds up to $b$, and consider the case $f_{b+1}$.

The final $b$-th crossing can be resolved in two ways:  vertically, yielding a summand ending in $A^\pm$, or horizontally, yielding a summand ending in $A^\mp$ and forcing the ($b-1$)-st term in this summand to be $f_2^\mp$.  This gives, as was noted above, $(\cdot,\cdot,A^\pm)+(\cdot,f_2^\mp,A^\mp)$ in general.  
  Observe that the $[\cdot,\cdot]$ and the $[\cdot]$ must represent terms in $f_b$ and $f_{b-1}$, respectively, as after the smoothings, one obtains smaller billiard table diagrams of these sizes.

Repeat this resolution for the ($b-1$)-st crossing of the first term, obtaining $(\cdot,\cdot,\cdot,A^\pm,A^\pm) +(\cdot,\cdot,f_2^\mp,A^\mp,A^\pm)$.  A similar observation as above holds for these terms with the appropriate index.

Amongst these three terms there is a single term in $f_b$ ending in $[f_2^\mp,A^\mp]$.  Clearly this term is a term in $f_{b+1}$ ending in $[f_2^\mp,A^\mp,A^\pm]$, proving $(f)$.

The other two terms can be combined to yield $(\cdot,\cdot,C)$.  Since these terms were terms in $f_b$ ending in $A^\pm$, this proves $(A)$.
\end{proof}

\begin{proposition}
\label{prop:OEIS}
The number of terms in the expansion of the Kauffman bracket polynomial $f_b$ of $T(3,b)$ of Theorem \ref{thm:3rec} gives a sequence that is an offset by four of the Padovan sequence: $a(n) = a(n-2) + a(n-3)$ with $a(0)=1$, $a(1)=a(2)=0$, which is A000931 in The On-Line Encyclopedia of Integer Sequences \cite{OEIS931}.
\end{proposition}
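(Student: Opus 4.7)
The plan is to partition the summands in the expansion of each $f_b$ by the shape of their terminal block, and then set up a three-term linear system whose characteristic polynomial is the Padovan polynomial $\lambda^{3}-\lambda-1$.

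First I would classify every summand of $f_b$ according to its last block, letting $A_b$, $B_b$, and $C_b$ denote respectively the number of summands ending in a single $A^{\pm}$, in the terminal pair $(f_2^{\mp},A^{\mp})$, and in a $C$-block. The quantity of interest is then $n_b = A_b + B_b + C_b$. The triples $(A_b,B_b,C_b)$ for small $b$ can be read directly off the explicit list in Theorem \ref{thm:3rec}.

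Next I would translate the three recursion rules of Theorem \ref{thm:3rec} into transitions on these counts: rule (A) sends each Type-A summand of $f_{b-1}$ to a single Type-C summand of $f_b$; rule (f) sends each Type-B summand to a Type-A summand; and rule (C) splits each Type-C summand into one Type-A summand and one Type-B summand. This gives the linear system
\[
A_b = B_{b-1}+C_{b-1}, \qquad B_b = C_{b-1}, \qquad C_b = A_{b-1}.
\]
The associated transition matrix has characteristic polynomial $\lambda^{3}-\lambda-1$, which is precisely the characteristic polynomial of the Padovan recursion $a(n)=a(n-2)+a(n-3)$. Equivalently, direct substitution yields $A_b = B_{b-1}+C_{b-1} = C_{b-2}+A_{b-2} = A_{b-3}+A_{b-2}$, and by linearity the same recurrence $x_b = x_{b-2}+x_{b-3}$ then holds for $B_b$, $C_b$, and hence for $n_b$.

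Finally I would check, directly from the $f_3,\ldots,f_6$ entries listed in Theorem \ref{thm:3rec}, that the initial values of $n_b$ agree with A000931 at the stated offset of four, pinning down both the recurrence and the initial conditions. The main obstacle will be the boundary between the base cases and the onset of the linear recursion: the three rules of Theorem \ref{thm:3rec} only begin to apply at the $f_4\to f_5$ step, so the transition equations for $(A_b,B_b,C_b)$ govern the counts only for $b$ sufficiently large, and one must verify by hand that the small-$b$ counts already align with the Padovan sequence so that no off-by-one shift survives in the comparison with A000931.
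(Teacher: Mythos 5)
Your proposal is correct and follows essentially the same route as the paper: the paper's proof also partitions the summands of $f_b$ into a triple counting those ending in $A^\pm$, in $[f_2^\mp,A^\mp]$, and in $C$, reads the same one-step transition off the three rules of Theorem \ref{thm:3rec}, iterates it to get the Padovan recurrence, and anchors the offset with the base tuples $(1,1,0)$, $(1,0,1)$, $(1,1,1)$ for $f_4$, $f_5$, $f_6$. Your observation that the transition matrix has characteristic polynomial $\lambda^3-\lambda-1$ is a pleasant but inessential embellishment of the same argument.
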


For information about the Padovan sequence (and how it relates to the Fibonacci sequence), see, for example, \cite{deW}.  For more connections between these sequences and grid graphs, see \cite{Eul:grid}.

\begin{proof}
Write as a triple $(x_i,y_i,z_i)$ the numbers of terms ending in $A^\pm$, $[f_2^\mp,A^\mp]$, and $C$, respectively, of $f_i$, the Kauffman bracket polynomial of $T(3,i)$, such that $f_i$ has $x_i+y_i+z_i$ terms in total.

The first two bases cases are $f_4$ with tuple (1,1,0) and $f_5$ with tuple (1,0,1), as seen in Examples \ref{ex:T34} and \ref{ex:T35}, respectively.  One can check that the next base case $f_6$ has tuple (1,1,1).  Suppose that the induction hypothesis holds up to $b$, and consider the case $f_{b+3}$.

Following Theorem \ref{thm:3rec}: the tuple associated to $f_{b}$ is $(y_b,z_b,x_b)$ for a total of $x_b+y_b+2z_b$ terms;  the tuple associated to $f_{b+1}$ is $(y_b+z_b,z_b,x_b)$ for a total of $x_b+y_b+2z_b$ terms; the tuple associated to $f_{b+2}$ is $(x_b+z_b,x_b,y_b+z_b)$ for a total of $2x_b+y_b+2z_b$ terms; and the tuple associated to $f_{b+3}$ is $(x_b+y_b+z_b,y_b+z_b,x_b+z_b)$ for a total of $2x_b+2y_b+3z_b$ terms.

Thus the number of terms in $f_{i+3}$ is equal to the sum of the number of terms of $f_i$ and $f_{i+1}$.

The fourth term in our sequence has $1+1+0=2$ terms, aligning it with $a(8)=2$, accounting for the offset.
\end{proof}

\begin{example}
We can then, as an application, compute the Kauffman bracket or Jones polynomials for the family of alternating knots where the crossings are $+-+-\ldots+-$.  In Table \ref{tab:alternating}, these are given by strings of coefficients for the appropriate terms.  It is interesting to note that for odd $b$ these are palindromic.

By work of Koseleff and Pecker \cite{PVK:fibknots}, this is the family $T(3,b,2b-3)$ with crossing number $b-1$.  They are Fibonacci knots with Schubert Fraction $F_{n}/F_{n-1}$.
\begin{table}[h]
\begin{tabular}{|c|c||c|}
\hline
$b$ & Knot &	 Coefficients of $f_b$ \\
\hline
\hline
2 &   $U$      &  (1,0) \\
4 &   $3_1$    &  (1,-1,0,-1) \\  
5 &   $4_1$    &  (1,-1,1,-1,1) \\
7 &   $6_3$    &  (-1,2,-2,3,-2,2,-1)   \\
8 &   $7_7$    &  (1,-3,3,-4,4,-3,2,-1)   \\
10&  $9_{31}$  &  (-1,4,-6,8,-10,9,-8,5,-3,1)   \\
11&  $10_{45}$ &  (-1,4,-7,11,-14,15,-14,11,-7,4,-1)   \\ 
\hline
\end{tabular}
\caption{A list of the alternating knots $T(3,b)$ with crossing number $b-1$.}
\label{tab:alternating}
\end{table}
\end{example}

\subsection{Geometric interpretation}
\label{subsec:3geom}

The Chebyshev diagram for $T(3,b,c)$ has as its associated \emph{balanced overlaid Tait graph} the $2\times (b-1)$ grid graph, as discussed in \cite{Co:clock}.  Perfect matchings on this grid graph are in one-to-one correspondence with domino tilings of a $2\times (b-1)$ board, here viewed as extending horizontally.  This is the setting that perhaps most often shows the relationship to Fibonacci numbers.

\begin{property}
\label{property:2xnFib}
The number of domino tilings of a $2\times n$ board is is $F_n$, where $F_0=1=F_1$, owing to the position of the dominoes in the final column:  a single vertical domino gives the recursion to $F(n-1)$ and two horizontal dominoes give the recurion to $F(n-2)$.
\end{property}

\begin{remark}
\label{rem:1xnFib}
In work by Benjamin, et al \cite{BEJS, BenWal}, this model is collapsed to a tiling of a $1\times n$ board with dominoes as well as $1\times 1$ squares.  The domino represents two horizontal dominoes, and the square represents a single vertical domino.
\end{remark}

Thus it should not be surprising that work in \cite{PVK:fib} relates the Alexander polynomial of 2-bridge knots to Fibonacci polynomials, as the Alexander polynomial is a determinant and perfect matchings on plane bipartite graphs correspond to terms in the determinant expansion.

We now give an alternate geometric interpretation of the model given in our Theorem \ref{thm:3rec} using a $2\times n$ board.  Recall the notation given above.
\begin{itemize}
	\item[($V$)] The $A^\pm$ is still a vertical domino.
	\item[($H$)] The $[f_2^\mp,A^\mp]$ is still two horizontal dominoes.
	\item[($C$)] The $C=[A^\pm,A^\pm]+[f_2^\mp,A^\mp]$ is thus the sum of two vertical dominoes with two horizontal dominoes.  This might be represented by a $2\times 2$ square.
	\item[($S_i$)] The two starting configurations, $f_3=[f_2^\pm,A^\pm]+[f_2^\mp,A^\mp]$, and $f_2^\pm$, might be represented by special ``capital letter'' tiles labelled by $S_2$ and $S_1$:  a $2\times 2$ square and a vertical domino, respectively.
\end{itemize}

\begin{question}
\label{question:3geom}
Can our problem be rephrased completely in terms of a combinatorial problem involving these tiles on a $2\times n$ board?
\end{question}

We can then re-interpret the previous examples:
\begin{align*}
f_4= {} & (S_2,V)+(S_1,H) \\
f_5= {} & (S_2,C)+(S_1,H,V) \\
f_6= {} & (S_2,[C,V]+[V,H])+(S_1,H,C) \\
f_7= {} & (S_2,C,C)+(S_2,V,H,V)+(S_1,H,[C,V]+[V,H]) \\
f_8= {} & (S_2,C,[C,V]+[V,H])+(S_2,V,H,C) +(S_1,H,C,C)+(S_1,H,V,H,V)\\
f_9= {} & (S_2,C,C,C)+(S_2,C,V,H,V)+(S_2,V,H,[C,V]+[V,H])\\
& +(S_1,H,C,[C,V]+[V,H]) +(S_1,H,V,H,C)\\
f_{10}= {} & (S_2,C,C,[C,V]+[V,H])+(S_2,C,V,H,C) +(S_2,V,H,C,C)+(S_2,V,H,V,H,V)\\
&  +(S_1,H,C,C,C)+(S_1,H,C,V,H,V)+(S_1,H,V,H,[C,V]+[V,H]) 
\end{align*}

\section{The height-4 case}
\label{sec:4}

We do not need to consider Chebyshev knots $T(4,b)$ except for a base case, the link $T(4,2,\pm\pm)$, that will appear in the following section.

What is interesting here is that from the billiard table diagram perspective we have two components of the link that are long knots, unlike the $a=3$ case where only one component is ever a long knot.  This requires us to make some decisions about the closures of our tangles.

A $k$-\emph{tangle} is an embedding of some $k$ arcs and any number of circles into $\mathbb{R}\times[0,1]$.  The knots studied in the last section, $2$-bridge knots, are examples of $2$-tangles, and these are often closed into knots using the \emph{numerator and denominator closures} which have no crossings.  The numerator closure identifies the two ``north'' endpoints and the two ``south'' endpoints, and the denominator closure identifies the two ``east'' endpoints and the two ``west'' endpoints.

However, the motivation from long knots is that the closure of two arcs happens at infinity.  Thus in our case with two components that are long knots, if the closures involve some crossing, one could interpret this as a single virtual crossing at infinity.

Perhaps it is this ambiguity that prompted Koseleff and Pecker to consider solely the case of knots, when $a$ and $b$ are co-prime, and when one need not add extra components to complete the billiard table diagram.

In our discussion, however, several $2$-tangles appear.  We will use closures that do not involve any additional crossings and attempt to show that this setting is correct.

Specifically for the following example, we insist on the convention of identifying the two ends of each of the two arcs.

\begin{example}
\label{ex:T42}
Consider the link $T(4,2,\pm\pm)$, where the two crossings are independent of each other, with the denominator closure of the tangle.  The Kauffman bracket polynomial is
\begin{equation}
\label{eq:42}
g_2=\delta(A^\pm,A^\pm)+(A^\pm,A^\mp)+(A^\mp,A^\pm)+\delta(A^\mp,A^\mp)=([X]+\delta[A^\mp,A^\mp]),
\end{equation}
where
\begin{equation}
\label{eq:X}
X=\delta[A^\pm,A^\pm]+[A^\pm,A^\mp]+[A^\mp,A^\pm]=\begin{cases}
1-(A^{\pm})^4 & \text{for $++$ or $--$, respectively,} \\
0 &\text{for $+-$ or $-+$.}
\end{cases}
\end{equation}

When the two signs agree we get $g_2=\delta(-\delta)+(2)=2-\delta^2=-A^4-A^{-4}$ for the Hopf link.  When they don't we get $g_2=\delta(2)+(-\delta)=\delta=-A^2-A^{-2}$ for the unlink.
\end{example}

\section{The height-5 case and bumpered billiard tables}
\label{sec:5}

In this section we consider Chebyshev knots $T(5,b)$ and their Kauffman bracket polynomials, denoted $h_b$.  The crossings are labelled from bottom to top in each column and then from left to right on the columns.  However, a new object will appear in the recursion, and we begin by defining these.

\begin{definition}
\label{def:pocket}
Let $B^n(a,b)$ and $B_n(a,b)$ be $(a-1)\times (b-1)$ billiard tables with $n$ ``bumpers'' or squares removed from the last column, starting at the top or bottom, respectively.  We call these \emph{bumpered billiard tables}.  Billiard balls can only leave the table at corner ``pockets,'' as before, although now there are some new corners.
\end{definition}

\begin{remark}
\label{rem:corners}
The examples we consider specifically avoid the situation where a crossing occurs at a new interior corner ``pocket'' so that there is no ambiguity as to what occurs there.
\end{remark}

\begin{example}
\label{ex:BUp257}
Consider the bumpered billiard table for $B^2(5,7)$, with two squares removed from the top of the last column, as shown in Figure \ref{fig:B2}A.  Note that there is still a single component here; this is not always the case for general bumpered billiard tables.

Consider the bumpered billiard table for $B_2(5,8)$, with two squares removed from the bottom of the last column, as shown in Figure \ref{fig:B2}B.  Note that there are two components here but that only one of them is a long knot.

We consider the class $B2(5,b)$, where we have $B^2(5,b)$ for odd $b$ and $B_2(5,b)$ for even $b$.  Observe that in order to satisfy Remark \ref{rem:corners}, the removal of two squares must come from the top for odd $b$ and from the bottom for even $b$.

The Kauffman bracket polynomials for this class are treated generally in Lemma \ref{prop:B25b} below.
\end{example}

\renewcommand{\thesubfigure}{}
\begin{figure}[htbp]
\centering
\subfigure[A.  Bumpered billiard table $B^2(5,7)$.]{
\begin{tikzpicture}[scale=.6]
    \foreach \i in {0,...,7} {
        \draw [very thin,lightgray] (\i,0) -- (\i,5);
    }
    \foreach \i in {0,...,5} {
        \draw [very thin,lightgray] (0,\i) -- (7,\i);
    }
\fill[white] (6.1,3.1) rectangle (7,5);
\draw[color=white](7.23,5.25) node[above right] {$B$};
\draw[thick](-.25,-.25) -- (5,5) -- (6,4) -- (2,0) -- (0,2) -- (3,5) -- (7,1) -- (6,0) -- (1,5) -- (0,4) -- (4,0) -- (7,3) -- (7.25,3.25);
\end{tikzpicture}
}
\hspace{2em}
\subfigure[B.  Bumpered billiard table $B_2(5,8)$ with two components.]{
\begin{tikzpicture}[scale=.6]
    \foreach \i in {0,...,8} {
        \draw [very thin,lightgray] (\i,0) -- (\i,5);
    }
    \foreach \i in {0,...,5} {
        \draw [very thin,lightgray] (0,\i) -- (8,\i);
    }
\fill[white] (7.1,0) rectangle (8,1.9);
\draw[thick, dashed] (7,1) -- (3,5) -- (0,2) -- (2,0) -- (7,5) -- (8,4) -- (4,0) -- (0,4) -- (1,5) -- (6,0) -- cycle;
\draw[thick] (-.25,-.25) -- (5,5) -- (8.25,1.75);
\end{tikzpicture}
}
\caption{\label{fig:B2} Bumpered billiard tables with two squares removed.}
\end{figure}
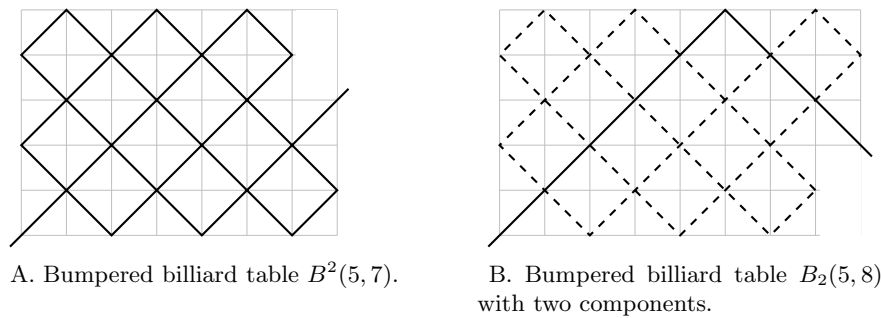

\renewcommand{\thesubfigure}{}
\begin{figure}[htbp]
\centering
\subfigure[A.  Bumpered billiard table $B_1(5,7)$ with two components, forming a 2-tangle.]{
\begin{tikzpicture}[scale=.6]
    \foreach \i in {0,...,7} {
        \draw [very thin,lightgray] (\i,0) -- (\i,5);
    }
    \foreach \i in {0,...,5} {
        \draw [very thin,lightgray] (0,\i) -- (7,\i);
    }
\fill[white] (6.1,0) rectangle (7,.9);
\draw[thick] (-.25,-.25) node {\textbullet} -- (5,5) -- (7,3) -- (4,0) -- (0,4) -- (1,5) -- (6,0);
\draw[thick, dashed] (7,1) -- (3,5) -- (0,2) -- (2,0) -- (7.23,5.25) node {$*$};
\draw[thick, dashed](7,1) -- (7.25,0.75) node {$*$};
\draw[thick](6,0) -- (6.25,-0.25) node {\textbullet};
\end{tikzpicture}
}
\hspace{2em}
\subfigure[B.  Bumpered billiard table $B^1(5,8)$ with two components, forming a 2-tangle.]{
\begin{tikzpicture}[scale=.6]
    \foreach \i in {0,...,8} {
        \draw [very thin,lightgray] (\i,0) -- (\i,5);
    }
    \foreach \i in {0,...,5} {
        \draw [very thin,lightgray] (0,\i) -- (8,\i);
    }
\fill[white] (7.1,4.04) rectangle (8,5);
\draw[thick](-.25,-.25) node {\textbullet} -- (5,5) -- (8,2) -- (6,0) -- (1,5) -- (0,4) -- (4,0) -- (8.25,4.25) node {$*$};
\draw[thick, dashed] (8.25,-.25) node {$*$} -- (3,5) -- (0,2) -- (2,0) -- (7.23,5.25) node {\textbullet};
\draw[thick](7,3) -- (7.25,3.25);
\end{tikzpicture}
}
\caption{\label{fig:B1} Bumpered billiard tables with one square removed, forming 2-tangles.  We choose as closures those that are preserved in the classical rectangular billiard tables, here identified by similar symbols.}
\end{figure}
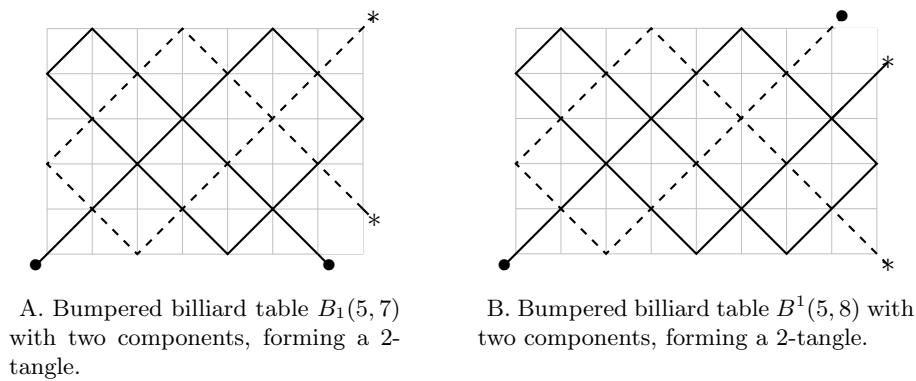

\begin{example}
\label{ex:BDown157}
Consider the bumpered billiard table for $B_1(5,7)$, with one square removed from the bottom of the last column, as shown in Figure \ref{fig:B1}A.  Note that there are now two components, yielding a 2-tangle.  We will choose to identify the ends of these tangles following the corresponding symbols in the figure, as this closure is preserved in the classical rectangular billiard tables.

Consider the bumpered billiard table for $B^1(5,8)$, with one square removed from the top of the last column, as shown in Figure \ref{fig:B1}B.  Note that there are now two components, yielding a 2-tangle.  We will choose to identify the ends of these tangles following the corresponding symbols in the figure, as this closure is preserved in the classical rectangular billiard tables.

We consider the class $B1(5,b)$, where we have $B^1(5,b)$ for even $b$ and $B_1(5,b)$ for odd $b$.  Observe that in order to satisfy Remark \ref{rem:corners}, the removal of a single square must come from the top for even $b$ and from the bottom for odd $b$.

The Kauffman bracket polynomials for this class are treated generally in Lemma \ref{prop:B15b} below.
\end{example}

\subsection{Small examples}
\label{subsec:5ex}
We consider the examples $T(5,b)$, $B2(5,b)$, and $B1(5,b)$ with small values of $b$.  Along the way, we introduce some notation and ideas that will be useful below.

\begin{example}
\label{ex:T51-2}
The knot $T(5,1,\emptyset)$ is the unknot and thus has Kauffman bracket and Jones polynonials equal to 1.

All four knots $T(5,2,\pm\pm)$ are the unknot with two Reidemeister I moves and thus have Jones polynonial equal to 1.  The Kauffman bracket polynomial is
\begin{equation}
\label{eq:T52}
h_2=(A^\pm,A^\pm)+\delta(A^\pm,A^\mp)+\delta(A^\mp,A^\pm)+\delta^2(A^\mp,A^\mp).
\end{equation}

Note that when the crossings have different signs, we get $h_2=1$.  When they are both positive or both negative, we get $h_2=A^{\mp6}$, respectively.
\end{example}

\begin{example}
\label{ex:BUp253}
Consider the bumpered billiard link $B^2(5,3,\pm\pm\pm)$, where the three crossings are independent of each other.  The Kauffman bracket polynomial is
\begin{equation}
\label{eq:BUp253}
b_3=(h_2,A^\pm)+(f_2^\mp,f_2^\pm,A^\mp).
\end{equation}
Let $M=[f_2^\mp,f_2^\pm,A^\mp]$ so that $b_3=(h_2,A^\pm)+(M)$.

Consider the bumpered billiard knot $B_1(5,3,\pm\pm\pm\pm)$, where the four crossings are independent of each other.  We choose the tangle closure as in Figure \ref{fig:B1}A.  The Kauffman bracket polynomial is
\begin{equation}
\label{eq:Down153}
bt_3=(h_2,X)+(f_2^\pm,f_2^\mp,A^\mp,A^\mp).
\end{equation}
Let $S=[f_2^\pm,f_2^\mp,A^\mp,A^\mp]$ so that $bt_3=(h_2,X)+(S)$.
\end{example}

\begin{example}
\label{ex:T53}
Consider the knot $T(5,3,\pm\pm\pm\pm)$, where the four crossings are independent of each other.  The Kauffman bracket polynomial is
\begin{equation}
\label{eq:53}
h_3=(h_2,A^\pm,A^\pm)+(f_2^\mp,f_2^\mp,A^\mp,A^\mp)+(g_2,A^\pm,A^\mp)+(f_2^\mp,f_2^\pm,A^\mp,A^\pm).
\end{equation}

Recall that $g_2=[X]+\delta[A^\mp,A^\mp]$ is the Kauffman bracket polynomial of $T(4,2)$ from Equation \ref{eq:42} of Section \ref{sec:4}.  Let $K=[f_2^\mp,f_2^\mp,A^\mp,A^\mp]$ so that $h_3=(h_2,A^\pm,A^\pm)+(K)+(g_2,A^\pm,A^\mp)+(M,A^\pm)$.
\end{example}

\begin{example}
\label{ex:BUp254}
Consider the bumpered billiard knot $B^2(5,4,\pm\pm\pm\pm\pm)$, where the five crossings are independent of each other.  The Kauffman bracket polynomial is
\begin{equation}
\label{eq:BUp254}
b_4=(h_3,\_,A^\pm)+(h_2,A^\pm,f_2^\mp,\_,A^\mp)+(M,f_2^\mp,\_,A^\mp),
\end{equation}
where here a $\_$ in the $i$-th position denotes that there is no $i$-th crossing following the standard ordering, due to the removed bumpers.

This notation also appears below when some giving information for later crossings after some $B2(5,b)$.

Consider the bumpered billiard knot $B_1(5,4,\pm\pm\pm\pm\pm\pm)$, where the six crossings are independent of each other.  We choose the tangle closure as in Figure \ref{fig:B1}B.  The Kauffman bracket polynomial is
\begin{equation}
\label{eq:Down154}
bt_4=(h_3,X)+(h_2,f_2^\mp,A^\pm,A^\mp,A^\mp)+(g_2,f_2^\mp,A^\mp,A^\mp,A^\mp).
\end{equation}
Let $R=[f_2^\mp,A^\pm,A^\mp,A^\mp]$ and $N=[f_2^\mp,A^\mp,A^\mp,A^\mp]$ so that $bt_4=(h_3,X)+(h_2,R)+(g_2,N)$.
\end{example}

\begin{example}
\label{ex:T54}
Consider the knot $T(5,4,\pm\pm\pm\pm\pm\pm)$, where the six crossings are independent of each other.  The Kauffman bracket polynomial is
\begin{equation}
\label{eq:54}
h_4=(h_3,A^\pm,A^\pm)+(h_2,K)+(b_3,f_2^\mp,A^\pm,A^\mp)+(bt_3,A^\mp,A^\pm).
\end{equation}
Let $L=[f_2^\mp,A^\pm,A^\mp]$ so that $h_4=(h_3,A^\pm,A^\pm)+(h_2,K)+(b_3,L)+(bt_3,A^\mp,A^\pm)$.  Then
\begin{align*}
h_4 = {} & (h_3,A^\pm,A^\pm)+(h_2,K)+(h_2,A^\pm,L)+(M,L)+(h_2,X,A^\mp,A^\pm)+(S,A^\mp,A^\pm)\\
= {} & (h_3,A^\pm,A^\pm)+(h_2,[K]+[A^\pm,L]+[X,A^\mp,A^\pm])+(M,L)+(S,A^\mp,A^\pm).
\end{align*}
Let $P'_1=\widetilde{P}'_1=[A^\pm,A^\pm]$ and $P'_2=[K]+[A^\pm,L]+[X,A^\mp,A^\pm]$ so that $h_4=(h_3,P'_1)+(h_2,P'_2)+(M,L)+(S,A^\mp,A^\pm)$.
\end{example}

\begin{example}
\label{ex:BUp255}
Consider the bumpered billiard link $B^2(5,5,\pm\pm\pm\pm\pm\pm\pm)$, where the seven crossings are independent of each other.  The Kauffman bracket polynomial is
\begin{equation}
\label{eq:BUp255}
b_5=(h_4,A^\pm)+(h_3,L)+(h_2,A^\pm,K)+(M,K).
\end{equation}

Consider the bumpered billiard link $B_1(5,5,\pm\pm\pm\pm\pm\pm\pm\pm)$, where the eight crossings are independent of each other.  We choose the tangle closure as in Figure \ref{fig:B1}A.  The Kauffman bracket polynomial is
\begin{equation}
\label{eq:Down155}
bt_5=(h_4,X)+(h_3,A^\pm,f_2^\mp,A^\mp,A^\mp)+(h_2,X,A^\mp,f_2^\mp,A^\mp,A^\mp)+(S,A^\mp,f_2^\mp,A^\mp,A^\mp).
\end{equation}
Let $\widetilde{R}=[A^\pm,f_2^\mp,A^\mp,A^\mp]$ and $\widetilde{N}=[A^\mp,f_2^\mp,A^\mp,A^\mp]$ so that $bt_5=(h_4,X)+(h_3,\widetilde{R})+(h_2,X,\widetilde{N})+(S,\widetilde{N})$.
\end{example}

\begin{example}
\label{ex:T55}
Consider the link $T(5,5,\pm\pm\pm\pm\pm\pm\pm\pm)$, where the eight crossings are independent of each other.  The Kauffman bracket polynomial is
\begin{equation}
\label{eq:55}
h_5=(h_4,P'_1)+(h_3,K)+(bt_4,A^\pm,A^\mp)+(b_4,f_2^\mp,\_,A^\mp,A^\pm),
\end{equation}
\begin{align*}
= {} & (h_3,P'_1,P'_1)+(h_2,P'_2,P'_1)+(M,L,P'_1)+(S,A^\mp,A^\pm,P'_1)+(h_3,K)\\
& +(h_3,X,A^\pm,A^\mp)+(h_2,R,A^\pm,A^\mp)+(g_2,K,A^\pm,A^\mp) \\
& +(h_3,L,A^\pm) +(h_2,A^\pm,K,A^\pm)+(M,K,A^\pm) \\
= {} & (h_3,P'_1,P'_1)+(h_3,[X,A^\pm,A^\mp]+[L,A^\pm]+[K])\\
& +(h_2,P'_2,P'_1)+(h_2,[R,A^\pm,A^\mp]+[A^\pm,K,A^\pm])\\
& +(M,L,P'_1)+(M,K,A^\pm)+(S,A^\mp,A^\pm,P'_1)+(g_2,K,A^\pm,A^\mp). \\
\end{align*}
Let $\widetilde{P}'_2=[X,A^\pm,A^\mp]+[L,A^\pm]+[K]$ and $P'_3=[R,A^\pm,A^\mp]+[A^\pm,K,A^\pm]$ so that $h_5=(h_3,P'_1,P'_1)+(h_3,\widetilde{P}'_2)+(h_2,P'_2,P'_1)+(h_2,P'_3)+(M,L,P'_1)+(M,K,A^\pm)+(S,A^\mp,A^\pm,P'_1)+(g_2,K,A^\pm,A^\mp)$.
\end{example}

As stated in the introduction, these blocks $P_i$ and $\widetilde{P}_i$ will appear as elements of size $i$ in a partition $\mathcal{P}_n$ of $n$ blocks.  See the next section.  To summarize, they are so far:
\begin{align*}
P'_1= {} & \widetilde{P}'_1 = [A^\pm,A^\pm]\\
P'_2= {} & [K]+[A^\pm,L]+[X,A^\mp,A^\pm]\\
= {} & [f_2^\mp,f_2^\mp,A^\mp,A^\mp]+[A^\pm,f_2^\mp,A^\pm,A^\mp]+\delta[A^\pm,A^\pm,A^\mp,A^\pm]+[A^\pm,A^\mp,A^\mp,A^\pm]+[A^\mp,A^\pm,A^\mp,A^\pm]\\
\widetilde{P}'_2= {} & [X,A^\pm,A^\mp]+[L,A^\pm]+[K]\\
= {} & \delta[A^\pm,A^\pm,A^\pm,A^\mp]+[A^\pm,A^\mp,A^\pm,A^\mp]+[A^\mp,A^\pm,A^\pm,A^\mp]+[f_2^\mp,A^\pm,A^\mp,A^\pm]+[f_2^\mp,f_2^\mp,A^\mp,A^\mp]\\
P'_3= {} & [R,A^\pm,A^\mp]+[A^\pm,K,A^\pm] \\
= {} & [f_2^\mp,A^\pm,A^\mp,A^\mp,A^\pm,A^\mp]+[A^\pm,f_2^\mp,f_2^\mp,A^\mp,A^\mp,A^\pm]
\end{align*}

\section{Main results on 3-bridge knots}
\label{subsec:results}

At last we have the machinery to arrive at the Main Theorem \ref{thm:main}, Notation \ref{note:thm}, and Proposition \ref{prop:exponential}.  The proof of the main theorem appears in Subsection \ref{subsec:proofs}, after two useful lemmas giving recursions for the Kauffman bracket polynomials of bumpered billiard tables in Subsection \ref{subsec:lemmasbumpered}.

\begin{theorem}
\label{thm:main}
(Main Theorem) Following Notation \ref{note:thm} below, the Kauffman bracket polynomials of $T(5,b)$ (for indeterminate crossing information), including the class of 3-bridge knots, are
\begin{align*}
h_b= {} & \sum_{i=3}^{b-1}([h_3,P_{i-2}]+[h_2,P_{i-1}]+[Q_i],\mathcal{P}_{b-1-i})
\end{align*}
for $b\geq 4$ with base cases $h_1=1$, $h_2=(A^\pm,A^\pm)+\delta(A^\pm,A^\mp)+\delta(A^\mp,A^\pm)+\delta^2(A^\mp,A^\mp)$,  and $h_3=(h_2,A^\pm,A^\pm)+(f_2^\mp,f_2^\mp,A^\mp,A^\mp)+(g_2,A^\pm,A^\mp)+(f_2^\mp,f_2^\pm,A^\mp,A^\pm)$, where $\delta=(-A^2-A^{-2})$.

In particular, here is a small list of Kauffman brackets for this class:
\begin{align*}
h_4= {} & (h_3,P_1)+(h_2,P_2)+(M,L)+(S,A^\mp,A^\pm) \\
h_5= {} & ([h_3,P_1]+[h_2,P_2]+[M,L]+[S,A^\mp,A^\pm],P_1)\\
{} & +(h_3,\widetilde{P}_2)+(h_2,P_3)+(M,K,A^\pm)+(g_2,K,A^\pm,A^\mp)\\
h_6= {} & ([h_3,P_1]+[h_2,P_2]+[M,L]+[S,A^\mp,A^\pm],[P_2]+[P_1,P_1]) \\
{} & +([h_3,\widetilde{P}_2]+[h_2,P_3]+[M,K,A^\pm]+[g_2,K,A^\pm,A^\mp],P_1)\\
{} & +(h_3,\widetilde{P}_3)+(h_2,P_4)+(M,K,L) +(S,\widetilde{N},A^\mp,A^\pm)
\end{align*}
\end{theorem}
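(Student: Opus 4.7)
The plan is a strong induction on $b$, with base cases $h_4$, $h_5$ (and the displayed $h_6$) established by the small computations of Subsection \ref{subsec:5ex}, in particular Examples \ref{ex:T54} and \ref{ex:T55}. For the inductive step, I would proceed in exactly the spirit of the proof of Theorem \ref{thm:3rec}, resolving the two Kauffman crossings in the rightmost column of $T(5,b)$ one at a time and starting from the top. A vertical smoothing produces a summand ending in $A^\pm$ over a diagram with one fewer vertical strand, while a horizontal smoothing forces the adjacent interior crossing to contribute an $f_2^\mp$ and exposes a bumpered billiard sub-diagram. Carrying both resolutions through, the four resulting sub-diagrams are $T(5,b-1)$ (contributing an $h_{b-1}$ prefix), $T(5,b-2)$ (contributing an $h_{b-2}$ prefix together with a $K = [f_2^\mp,f_2^\mp,A^\mp,A^\mp]$ tail), the bumpered billiard $B2(5,b-1)$, and the bumpered billiard $B1(5,b-1)$, mirroring exactly the decomposition $h_5 = (h_4,P'_1) + (h_3,K) + (bt_4,A^\pm,A^\mp) + (b_4,f_2^\mp,\_,A^\mp,A^\pm)$ seen in Example \ref{ex:T55}.

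The inductive step then requires substituting the inductive formulas for $h_{b-1}$ and $h_{b-2}$, together with the bumpered recursions provided by Lemmas \ref{prop:B25b} and \ref{prop:B15b} of Subsection \ref{subsec:lemmasbumpered}. Those two lemmas should express $b_{b-1}$ and $bt_{b-1}$ in the same block language $\{P_i,\widetilde P_i,Q_i\}$ used in the statement, but with the rightmost block replaced by a bumper-specific block of type $R$, $\widetilde R$, $N$, or $\widetilde N$ from Examples \ref{ex:BUp254} and \ref{ex:BUp255}. After substitution, the rightmost block of each summand becomes exactly one of $P_{i-2}$, $P_{i-1}$, or $Q_i$, depending on whether the inherited prefix is $h_3$, $h_2$, or one of $M$, $S$, $g_2$; the small terms $(M,L,\ldots)$, $(S,A^\mp,A^\pm,\ldots)$, $(M,K,A^\pm)$ and $(g_2,K,A^\pm,A^\mp)$ visible in $h_4$ and $h_5$ are precisely the initial members of the $Q_i$ family. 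Collecting all summands by the size $i$ of this rightmost block gives the outer sum $\sum_{i=3}^{b-1}$ of the theorem.

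The final algebraic step is to identify the tails arising from the recursion with the partition set $\mathcal{P}_{b-1-i}$: one must verify that the collection of tail-tuples produced by iterated resolution is in natural bijection with ordered partitions of $b-1-i$, where a part of size $k$ in position $j$ contributes the block $P_k(j)$ according to the parity convention $P_k(j)=P'_k$ when $k+j$ is odd and $\widetilde P'_k$ when $k+j$ is even. The main obstacle I anticipate is exactly this parity bookkeeping: the switch between tilded and untilded blocks corresponds geometrically to the alternation, as one slides leftward across the billiard table, of whether the last column of a sub-diagram has its removed square at top or bottom (compare the even/odd distinction in the class $B2(5,b)$ of Example \ref{ex:BUp257} and $B1(5,b)$ of Example \ref{ex:BDown157}), and this must be tracked consistently through each application of the bumpered lemmas. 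A secondary subtlety is that the tangle closures selected in Figure \ref{fig:B1} must be preserved under every smoothing of the inductive pass so that no extra crossings or components are introduced; this is controlled by Remark \ref{rem:corners} together with the choice of closures that match those of the classical rectangular billiards. Modulo these combinatorial verifications, the inductive step is a direct substitution and the theorem follows.
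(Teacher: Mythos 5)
Your proposal follows essentially the same route as the paper's own proof: the same four-way resolution of the last two crossings into $T(5,b-1)$, $T(5,b-2)$ with a $K$ tail, and the two bumpered tables $B2(5,b-1)$ and $B1(5,b-1)$, followed by substitution of Lemmas \ref{prop:B25b} and \ref{prop:B15b}, pairing of their terms into the $P'_i$, $\widetilde{P}'_i$, $Q_i$ blocks, and identification of the iterated tails with $\mathcal{P}_{b-1-i}$ via the inductive hypothesis. The only minor imprecision is that the bumpered lemmas do not themselves output $P$-blocks; rather, corresponding summands of $b_{n-1}$ and $bt_{n-1}$ sharing an $h_j$ prefix must be added in pairs to assemble each $P'_i$ or $\widetilde{P}'_i$, which is exactly the regrouping the paper carries out.
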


\begin{notation}
\label{note:thm}
Let $\square^i$ be $[\square,\square,\ldots,\square]$ written $i$ times.  Let $f_2^\pm=-A^{\mp3}$ and $g_2$ be the Kauffman bracket polynomials of $T(3,2)$ and $T(4,2)$, respectively.  Let $X=\delta[A^\pm,A^\pm]+[A^\pm,A^\mp]+[A^\mp,A^\pm]$.

Let $P'_1=\widetilde{P}'_1=[A^\pm,A^\pm]$, and the remaining $P'_i$ and $\widetilde{P}'_i$ be as follows: \\
$P'_2=[f_2^\mp,f_2^\mp,A^\mp,A^\mp]+[A^\pm,f_2^\mp,A^\pm,A^\mp]+\delta[A^\pm,A^\pm,A^\mp,A^\pm]+[A^\pm,A^\mp,A^\mp,A^\pm]+[A^\mp,A^\pm,A^\mp,A^\pm]$,\\
$\widetilde{P}'_2=[f_2^\mp,f_2^\mp,A^\mp,A^\mp] +[f_2^\mp,A^\pm,A^\mp,A^\pm] +\delta[A^\pm,A^\pm,A^\pm,A^\mp]+[A^\mp,A^\pm,A^\pm,A^\mp]+[A^\pm,A^\mp,A^\pm,A^\mp]$, 

\begin{equation*}
P'_i=
\begin{cases}
[f_2^\mp,A^\pm,A^\mp,A^\mp,[f_2^\mp,A^\mp,A^\mp,A^\mp]^{j},A^\pm,A^\mp] & \\
\mbox{\hspace{5mm}} + [A^\pm,[f_2^\mp,f_2^\mp,A^\mp,A^\mp]^{j+1},A^\pm] &\mbox{ for $i=2j+3$ odd and }\\
[A^\pm,[f_2^\mp,f_2^\mp,A^\mp,A^\mp]^{j},f_2^\mp,A^\pm,A^\mp] & \\
\mbox{\hspace{5mm}} + [X,[f_2^\mp,A^\mp,A^\mp,A^\mp]^{j},A^\mp,A^\pm] &\mbox{ for $i=2j+2$ even. }
\end{cases}
\end{equation*}

\begin{equation*}
\widetilde{P}'_i=
\begin{cases}
[f_2^\mp,A^\pm,A^\mp,[f_2^\mp,f_2^\mp,A^\mp,A^\mp]^{j},f_2^\mp,A^\pm,A^\mp] & \\
\mbox{\hspace{5mm}} + [A^\pm,f_2^\mp,A^\mp,A^\mp,[A^\mp,f_2^\mp,A^\mp,A^\mp]^{j},A^\mp,A^\pm] &\mbox{ for $i=2j+3$ odd and }\\
[f_2^\mp,A^\pm,A^\mp,[f_2^\mp,f_2^\mp,A^\mp,A^\mp]^{j},A^\pm] & \\
\mbox{\hspace{5mm}} + [X,[f_2^\mp,A^\mp,A^\mp,A^\mp]^{j},A^\pm,A^\mp] &\mbox{ for $i=2j+2$ even. }
\end{cases}
\end{equation*}

The blocks $P'_i$ and $\widetilde{P}'_i$ correspond to elements of size $i$ in $\mathcal{P}_n$, the set of all partitions of the integer $n$.  
  Let $j$ be the position in $n$ of the first part of one of these elements.  For convenience of notation we set 
\begin{equation*}
P_i:=P_i(j)=
\begin{cases}
P'_i  &\mbox{ if $i+j$ is odd and }\\
\widetilde{P}'_i  &\mbox{ if $i+j$ is even. }
\end{cases}
\end{equation*}

For $i\geq 3$, let $Q_i$ be as follows:
\begin{equation*}
Q_i=
\begin{cases}
[f_2^\mp,f_2^\pm,A^\mp,[f_2^\mp,f_2^\mp,A^\mp,A^\mp]^{j},f_2^\mp,A^\pm,A^\mp] &\\
\mbox{\hspace{5mm}} +[f_2^\pm,f_2^\mp,A^\mp,A^\mp,[A^\mp,f_2^\mp,A^\mp,A^\mp]^{j},A^\mp,A^\pm] &\mbox{ for $i=2j+3$ odd and }\\
[f_2^\mp,f_2^\pm,A^\mp,[f_2^\mp,f_2^\mp,A^\mp,A^\mp]^{j},A^\pm] & \\
\mbox{\hspace{5mm}} + [g_2,[f_2^\mp,A^\mp,A^\mp,A^\mp]^{j},A^\pm,A^\mp] &\mbox{ for $i=2j$ even. }
\end{cases}
\end{equation*}

For completion, here are some other variables that appear above:
\begin{align*}
K=[f_2^\mp,f_2^\mp,A^\mp,A^\mp],  {\mbox{\hspace{10mm}}} & N=[f_2^\mp,A^\mp,A^\mp,A^\mp], \\
L=[f_2^\mp,A^\pm,A^\mp], {\mbox{\hspace{10mm}}} &  \widetilde{N}=[A^\mp,f_2^\mp,A^\mp,A^\mp], \\
M=[f_2^\mp,f_2^\pm,A^\mp], {\mbox{\hspace{10mm}}} &  R=[f_2^\mp,A^\pm,A^\mp,A^\mp], \\
S=[f_2^\pm,f_2^\mp,A^\mp,A^\mp],  {\mbox{\hspace{10mm}}} &  \widetilde{R}=[A^\pm,f_2^\mp,A^\mp,A^\mp].
\end{align*}
Note that this allows one to write:
\begin{equation*}
P'_i=
\begin{cases}
[R,N^j,A^\pm,A^\mp]+[A^\pm,K^{j+1},A^\pm] &\mbox{ for $i=2j+3$ odd and }\\
[X,N^j,A^\mp,A^\pm]+[A^\pm,K^j,L] &\mbox{ for $i=2j+2$ even. }
\end{cases}
\end{equation*}
\begin{equation*}
\widetilde{P}'_i=
\begin{cases}
[L,K^j,L]+[\widetilde{R},\widetilde{N}^j,A^\mp,A^\pm] &\mbox{ for $i=2j+3$ odd and }\\
[L,K^j,A^\pm]+[X,N^j,A^\pm,A^\mp] &\mbox{ for $i=2j+2$ even. }
\end{cases}
\end{equation*}
\begin{equation*}
Q_i=
\begin{cases}
[M,K^{j},L]+[S,\widetilde{N}^{j},A^\mp,A^\pm] &\mbox{ for $i=2j+3$ odd and }\\
[M,K^{j},A^\pm]+[g_2,N^j,A^\pm,A^\mp] &\mbox{ for $i=2j$ even. }
\end{cases}
\end{equation*}
\end{notation}

\begin{proposition}
\label{prop:exponential}
The number of terms in the expansion of the Kauffman bracket polynomial $h_b$ of $T(5,b)$ in Theorem \ref{thm:main} is $2^{b-4}$.
\end{proposition}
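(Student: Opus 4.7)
My plan is a direct count at the level of the outermost sums in Theorem \ref{thm:main}. The formula
\begin{equation*}
h_b = \sum_{i=3}^{b-1} \bigl([h_3, P_{i-2}] + [h_2, P_{i-1}] + [Q_i],\ \mathcal{P}_{b-1-i}\bigr)
\end{equation*}
together with the phrasing ``$2^{b-4}$ terms in the sum over all $\mathcal{P}_j$'' from the introduction tells me to count one term per composition appearing in the indexing set, so the quantity to evaluate is
\begin{equation*}
N_b \;=\; \sum_{i=3}^{b-1} |\mathcal{P}_{b-1-i}| \;=\; \sum_{k=0}^{b-4} |\mathcal{P}_k|,
\end{equation*}
after the reindexing $k = b-1-i$.

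The first step I have to carry out is pinning down $\mathcal{P}_n$. Although Notation \ref{note:thm} describes it as the set of integer partitions of $n$, the block $P_i = P_i(j)$ switches between $P'_i$ and $\widetilde{P}'_i$ according to the \emph{position} $j$ of the $i$-th part, so parts are intrinsically ordered; hence $\mathcal{P}_n$ is really the set of compositions of $n$ into positive parts (with the empty composition when $n = 0$). This is forced: if $\mathcal{P}_n$ were unordered partitions, then for $b - 4 = 3$ one would have $\sum_{k=0}^{3} p(k) = 7 \neq 8 = 2^3$, contradicting the proposition. The small cases in Theorem \ref{thm:main} (for instance, $\mathcal{P}_2 = [P_2] + [P_1, P_1]$ inside the $h_6$ expansion) are consistent with the compositions interpretation. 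The standard bijection between compositions of $k \geq 1$ and subsets of the $k-1$ gaps between $k$ unit tokens then gives $|\mathcal{P}_0| = 1$ and $|\mathcal{P}_k| = 2^{k-1}$ for $k \geq 1$.

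The final step is to sum the geometric series:
\begin{equation*}
N_b = |\mathcal{P}_0| + \sum_{k=1}^{b-4} 2^{k-1} = 1 + (2^{b-4} - 1) = 2^{b-4}.
\end{equation*}
Equivalently, induction on $b \geq 4$ dispatches the claim in one line: the base case $N_4 = |\mathcal{P}_0| = 1 = 2^0$ is immediate, and $N_b = N_{b-1} + |\mathcal{P}_{b-4}| = 2^{b-5} + 2^{b-5} = 2^{b-4}$ closes the induction. The main (and essentially only) obstacle is the preliminary bookkeeping---recognizing $\mathcal{P}_n$ as compositions rather than unordered partitions---after which the count is elementary. This is unsurprising, since the indexing of Theorem \ref{thm:main} was built to be parameterized by exactly these compositions, with all of the prefix-suffix data for each $i$ packaged into the factor $[h_3, P_{i-2}] + [h_2, P_{i-1}] + [Q_i]$ counted as a single block.
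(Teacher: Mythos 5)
Your proof is correct and follows essentially the same route as the paper, which likewise observes $|\mathcal{P}_j| = 2^{j-1}$ and sums the geometric series $1 + \sum_{j=1}^{b-4} 2^{j-1} = 2^{b-4}$. Your preliminary remark that $\mathcal{P}_n$ must be read as compositions (ordered parts) rather than unordered partitions is a worthwhile clarification the paper leaves implicit, but it does not change the argument.
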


\begin{proof}
Observe that $|\mathcal{P}_j|=2^{j-1}$.  Then the sum over all $\mathcal{P}_j$ gives $1+\sum_{j=1}^{b-4} 2^{j-1}=2^{b-4}$ terms.
\end{proof}

\begin{remark}
\label{rem:terms3}
Note that this is far fewer than the usual $2^{2(b-1)}$ terms using the Skein relation for the $2(b-1)$ crossings of the diagram.
\end{remark}

\begin{example}
We can then, as an application, compute the Kauffman bracket or Jones polynomials for the family of alternating Chebyshev knots where the crossings are $++--++--\ldots++--$.  In Table \ref{tab:alternating2}, these are given by strings of coefficients for the appropriate terms.
\begin{table}[h]
\begin{tabular}{|c|c||c|}
\hline
$b$ & Knot &	 Coefficients of $h_b$ \\
\hline
\hline
2 &   $U$      &  (1) \\
3 &   $4_1$    &  (1,-1,1,-1,1) \\
4 &   $6_2$    &  (1,-1,2,-2,2-2,1) \\  
6 & $10_{116}$ &  (1,-4,8,-11,15,-16,15,-12,8,-4,1)   \\
7 & $12a_{0960}$ &  (1,-5,13,-23,34,-42,45,-42,34,-23,13,-5,1)   \\
\hline
\end{tabular}
\caption{A list of the alternating knots $T(5,b)$ with crossing number $2(b-1)$.}
\label{tab:alternating2}
\end{table}
\end{example}

\begin{question}
\label{question:5geom}
Can our problem be rephrased in terms of a combinatorial problem involving these tiles on a $4\times n$ board?
\end{question}

\begin{question}
\label{ques:unknot}
Can this model be used to show that the Jones polynomial detects the unknot for knots that ($k\leq 3$)-bridge?
\end{question}

\subsection{Lemmas on bumpered billiard tables}
\label{subsec:lemmasbumpered}

The following two lemmas on the Kauffman bracket polynomials of bumpered billiard tables give recursions in terms of the Kauffman bracket polynomials of rectangular billiard tables and are used for the main proof in the next subsection.

For convenience in this subsection, we replace the variable $b$ with the variable $n$ so that the Kauffman bracket polynomials are $b_n$ and $bt_n$ for the two classes.

We first consider the class $B2(5,n)$, where we have $B^2(5,n)$ for odd $n$ and $B_2(5,n)$ for even $n$.  Although this may appear strange, it follows naturally from the recursion.

\begin{lemma}
\label{prop:B25b}
The Kauffman bracket polynomials of $B2(5,n)$ (for indeterminate crossing information) are:
\begin{align}
b_{n\text{ odd}}= {} &  (M,K^{\frac{n-3}{2}})+\sum_{i=0}^\frac{n-3}{2}(h_{n-1-2i},A^\pm,K^i) +\sum_{i=0}^\frac{n-5}{2}(h_{n-2-2i},L,K^i), \\
b_{n\text{ even}}= {} & (h_{n-1},\_,A^\pm) +(M,K^{\frac{n-4}{2}},f_2^\mp,\_,A^\mp)\\
& +\sum_{i=0}^\frac{n-4}{2}(h_{n-2-2i},A^\pm,K^i,f_2^\mp,\_,A^\mp) +\sum_{i=0}^\frac{n-6}{2}(h_{n-3-2i},L,K^i,f_2^\mp,\_,A^\mp),
\end{align}
for $n\geq5$ and with base cases $b_1=1$, $b_2=(\_,f_2^\pm)$, $b_3=(h_2,A^\pm)+(M)$, and $b_4=(h_3,\_,A^\pm)+(h_2,A^\pm,f_2^\mp,\_,A^\mp)+(M,f_2^\mp,\_,A^\mp)$, where $\square^i$ represents $[\square,\square,\ldots,\square]$, written $i$ times, and where $K=[f_2^\mp,f_2^\mp,A^\mp,A^\mp]$, $L=[f_2^\mp,A^\pm,A^\mp]$, and $M=[f_2^\mp,f_2^\pm,A^\mp]$.

In particular, here is a list of Kauffman brackets for this class:
\begin{align*}
b_3= {} & (h_2,A^\pm)+(M) \\
b_4= {} & (h_3,\_,A^\pm)+(h_2,A^\pm,f_2^\mp,\_,A^\mp) +(M,f_2^\mp,\_,A^\mp) \\
b_5= {} & (h_4,A^\pm)+(h_3,L)+(h_2,A^\pm,K) +(M,K) \\
b_6= {} & (h_5,\_,A^\pm)+(h_4,A^\pm,f_2^\mp,\_,A^\mp)+(h_3,L,f_2^\mp,\_,A^\mp)+(h_2,A^\pm,K,f_2^\mp,\_,A^\mp) +(M,K,f_2^\mp,\_,A^\mp) \\
b_7= {} & (h_6,A^\pm)+(h_5,L)+(h_4,A^\pm,K)+(h_3,L,K) +(h_2,A^\pm,K,K) +(M,K,K) \\
b_8= {} & (h_7,\_,A^\pm)+(h_6,A^\pm,f_2^\mp,\_,A^\mp)+(h_5,L,f_2^\mp,\_,A^\mp)+(h_4,A^\pm,K,f_2^\mp,\_,A^\mp)\\
& +(h_3,L,K,f_2^\mp,\_,A^\mp) +(h_2,A^\pm,K,K,f_2^\mp,\_,A^\mp) +(M,K,K,f_2^\mp,\_,A^\mp) \\
b_9= {} & (h_8,A^\pm)+(h_7,L)+(h_6,A^\pm,K)+(h_5,L,K)+(h_4,A^\pm,K,K)+(h_3,L,K,K) \\
&  +(h_2,A^\pm,K,K,K)+(M,K,K,K)\\
b_{10}= {} & (h_9,\_,A^\pm)+(h_8,A^\pm,f_2^\mp,\_,A^\mp)+(h_7,L,f_2^\mp,\_,A^\mp)+(h_6,A^\pm,K,f_2^\mp,\_,A^\mp) \\
& +(h_5,L,K,f_2^\mp,\_,A^\mp)+(h_4,A^\pm,K,K,f_2^\mp,\_,A^\mp)+(h_3,L,K,K,f_2^\mp,\_,A^\mp)  \\
& +(h_2,A^\pm,K,K,K,f_2^\mp,\_,A^\mp)+(M,K,K,K,f_2^\mp,\_,A^\mp)
\end{align*}
\end{lemma}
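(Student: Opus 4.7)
The plan is to proceed by induction on $n$, handling both parities in tandem. The base cases $b_1, b_2, b_3, b_4$ follow directly from the computations in Examples \ref{ex:BUp253} and \ref{ex:BUp254}, so the substance of the proof lies in the inductive step. I would first extract two clean one-step recursions from the skein relation, namely
\begin{equation*}
b_{n+2} = (h_{n+1}, A^\pm) + (h_n, L) + (b_n, K) \quad \text{for odd } n,
\end{equation*}
and the parity-changing identity
\begin{equation*}
b_{n+1} = (h_n, \_, A^\pm) + (b_n, f_2^\mp, \_, A^\mp) \quad \text{for odd } n.
\end{equation*}
Each is verified by applying the Kauffman skein relation $\langle L\rangle = A\langle L_0\rangle + A^{-1}\langle L_\infty\rangle$ to the crossings introduced by the new column(s) of the bumpered billiard table, exactly in the spirit of the proof of Theorem \ref{thm:3rec} in the height-$3$ setting. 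The smoothings split into a few configurations: one resolution yields a rectangular Chebyshev diagram $T(5, n+1)$, contributing $(h_{n+1}, A^\pm)$; another forces the block $L$ to appear atop a $T(5, n)$, contributing $(h_n, L)$; and the remaining smoothings force the block $K = [f_2^\mp, f_2^\mp, A^\mp, A^\mp]$ while leaving a copy of the smaller bumpered table $B2(5, n)$, contributing $(b_n, K)$. A slot-count check ($2n+1$ in each term) confirms consistency with the crossing count of $B2(5, n+2)$.

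Once the one-step recursions are in hand, I would unfold them using the inductive hypothesis. Substituting the inductive form of $b_n$ into $(b_n, K)$ distributes into $(M, K^{(n-1)/2})$ together with a $K$ appended to each of the $(h_{n-1-2i}, A^\pm, K^i)$ and $(h_{n-2-2i}, L, K^i)$ summands; after re-indexing and combining with the new $(h_{n+1}, A^\pm)$ and $(h_n, L)$ terms from the skein, this reproduces exactly the closed form claimed for $b_{n+2}$ odd. The even case is then obtained by appending $(f_2^\mp, \_, A^\mp)$ term-by-term to the odd case via the parity-changing identity, which is immediate once the one-step recursion is verified.

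The main obstacle is the topological verification at the bumpered region. At each smoothing of a crossing adjacent to a removed square, one must check whether a new closed loop is created (contributing an extra factor of $\delta$), whether the resulting diagram is another bumpered billiard table of the expected type, and whether the tangle closures introduced in Section \ref{sec:4} are respected throughout the process. Remark \ref{rem:corners} ensures that no crossing lies at a newly created interior corner of the bumper, removing the most subtle source of ambiguity; nevertheless, the careful matching between the smoothings of the newly added crossings and the three targeted contributions $(h_{n+1}, A^\pm)$, $(h_n, L)$, and $(b_n, K)$ is the core calculation and will require explicit case analysis of the rightmost one or two columns of $B2(5, n+2)$.
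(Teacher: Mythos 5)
Your proposal is correct and follows essentially the same route as the paper: establish the base cases from Examples \ref{ex:BUp253} and \ref{ex:BUp254}, derive a skein-relation recursion expressing $b_n$ in terms of $h_{n-1}$ and the smaller bumpered table $b_{n-1}$ (your odd-to-odd recursion $b_{n+2}=(h_{n+1},A^\pm)+(h_n,L)+(b_n,K)$ is exactly the composition of the paper's two parity-alternating one-step recursions $b_n=(h_{n-1},[\_,]A^\pm)+(b_{n-1},f_2^\mp,\_,A^\mp)$), and then unfold by induction with the same re-indexing. The only difference is cosmetic packaging, and your identification of the bumpered-region case analysis as the core verification matches what the paper itself treats briefly.
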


\begin{proof}
The case $B^2(5,1)$ is equivalent to $T(3,1)$, and so $b_1=1$.  The case $B_2(5,2)$ is equivalent to $T(3,2)$ after a ``skipped'' first crossing, and so we get $b_2=(\_,f_2^\pm)$.  The actual base cases $B^2(5,3)$ and $B_2(5,4)$ for the induction are handled in Examples \ref{ex:BUp253} and \ref{ex:BUp254}.

Prior to addressing the induction hypothesis, consider first the two resolutions on the last crossing.  One of them results in a rectangular billiard table of width one less, and the other results in a nugatory crossing that, when resolved, achieves a \emph{bumpered} billiard table of width one less.  This is summarized in the following equations:
\begin{align}
b_{n\text{ odd}}= {} & (h_{n-1},A^\pm)+(b_{n-1},f_2^\mp,\_,A^\mp) \text{ and} \\
b_{n\text{ even}}= {} & (h_{n-1},\_,A^\pm)+(b_{n-1},f_2^\mp,\_,A^\mp),
\end{align}
where for odd $n$, the $f_2^\mp$ is ordered \emph{before the last term} of the $b_{n-1}$ due to placement of the last crossing within the billiard table.

Suppose the induction hypothesis holds for even $n$.  Then for odd $n+1$,
\begin{align*}
b_{n+1\text{ odd}}= {} & (h_{n},A^\pm)+(b_{n},f_2^\mp,\_,A^\mp) \\
= {} & (h_{n},A^\pm) + (h_{n-1},L) +(M,K^{\frac{n-4}{2}+1})\\
& +\sum_{i=0}^\frac{n-4}{2}(h_{n-2-2i},A^\pm,K^{i+1}) +\sum_{i=0}^\frac{n-6}{2}(h_{n-3-2i},L,K^{i+1}) \\
= {} & (h_{n},A^\pm) + (h_{n-1},L) +(M,K^{\frac{n-4}{2}+1})\\
& +\sum_{i=1}^\frac{n-2}{2}(h_{n-2i},A^\pm,K^{i}) +\sum_{i=1}^\frac{n-4}{2}(h_{n-1-2i},L,K^{i}) \\
= {} & +\sum_{i=0}^\frac{n-2}{2}(h_{n-2i},A^\pm,K^{i}) +\sum_{i=0}^\frac{n-4}{2}(h_{n-1-2i},L,K^{i}) +(M,K^{\frac{n-4}{2}+1}),
\end{align*}
as desired.

Suppose the induction hypothesis holds for odd $n$.  Then for even $n+1$,
\begin{align*}
b_{n+1\text{ even}}= {} & (h_{n},\_,A^\pm)+(b_{n},f_2^\mp,\_,A^\mp) \\
= {} & (h_{n},\_,A^\pm)+(M,K^{\frac{n-3}{2}},f_2^\mp,\_,A^\mp)\\
& +\sum_{i=0}^\frac{n-3}{2}(h_{n-1-2i},A^\pm,K^i,f_2^\mp,\_,A^\mp) +\sum_{i=0}^\frac{n-5}{2}(h_{n-2-2i},L,K^i,f_2^\mp,\_,A^\mp),
\end{align*}
as desired.
\end{proof}

We consider the class $B1(5,n)$, where we have $B_1(5,n)$ for odd $n$ and $B^1(5,n)$ for even $n$.  These actually give 2-tangles, and so our notation is $bt_n$ for the Kauffman bracket polynomials.  
  Although this alternating class may appear strange, it follows naturally from the recursion, but we consider these two cases separately.

\begin{lemma}
\label{prop:B15b}
The Kauffman bracket polynomials of $B1(5,n)$ (for indeterminate crossing information) are:
\begin{align}
bt_{n\text{ odd}}= {} &  (S,\widetilde{N}^\frac{n-3}{2})+\sum_{i=0}^\frac{n-3}{2}(h_{n-1-2i},X,\widetilde{N}^i) +\sum_{i=0}^\frac{n-5}{2}(h_{n-2-2i},\widetilde{R},\widetilde{N}^i), \\
bt_{n\text{ even}}= {} & (g_2,N^\frac{n-2}{2}) +\sum_{i=0}^\frac{n-4}{2}(h_{n-1-2i},X,N^i) +\sum_{i=0}^\frac{n-4}{2}(h_{n-2-2i},R,N^i),
\end{align}
for $n\geq3$ and with base cases $bt_1=1$ and $bt_2=(g_2)$, where $\square^i$ represents $[\square,\square,\ldots,\square]$, written $i$ times, and where  
$X=\delta[A^\pm,A^\pm]+[A^\pm,A^\mp]+[A^\mp,A^\pm]$, $N=[f_2^\mp,A^\mp,A^\mp,A^\mp]$, $\widetilde{N}=[A^\mp,f_2^\mp,A^\mp,A^\mp]$, $R=[f_2^\mp,A^\pm,A^\mp,A^\mp]$, $\widetilde{R}=[A^\pm,f_2^\mp,A^\mp,A^\mp]$, and $S=[f_2^\pm,f_2^\mp,A^\mp,A^\mp]$.  Recall that $g_2=([X]+\delta[A^\mp,A^\mp])$ is the Kauffman bracket polynomial of $T(4,2)$ from Section \ref{sec:4}.

In particular, here is a list of Kauffman brackets for this class:
\begin{align*}
bt_3= {} & (h_2,X)+(S) \\
bt_4= {} & (h_3,X)+(h_2,R)+(g_2,N) \\
bt_5= {} & (h_4,X)+(h_3,\widetilde{R})+(h_2,X,\widetilde{N})+(S,\widetilde{N}) \\
bt_6= {} & (h_5,X)+(h_4,R)+(h_3,X,N)+(h_2,R,N)+(g_2,N,N) \\
bt_7= {} & (h_6,X)+(h_5,\widetilde{R})+(h_4,X,\widetilde{N})+(h_3,\widetilde{R},\widetilde{N})+(h_2,X,\widetilde{N},\widetilde{N})+(S,\widetilde{N},\widetilde{N}) \\
bt_8= {} & (h_7,X)+(h_6,R)+(h_5,X,N)+(h_4,R,N)+(h_3,X,N,N)+(h_2,R,N,N)+(g_2,N,N,N) \\
bt_9= {} & (h_8,X)+(h_7,\widetilde{R})+(h_6,X,\widetilde{N})+(h_5,\widetilde{R},\widetilde{N})+(h_4,X,\widetilde{N},\widetilde{N})+(h_3,\widetilde{R},\widetilde{N},\widetilde{N}) \\
& +(h_2,X,\widetilde{N},\widetilde{N},\widetilde{N})+(S,\widetilde{N},\widetilde{N},\widetilde{N}) \\
bt_{10}= {} & (h_9,X)+(h_8,R)+(h_7,X,N)+(h_6,R,N)+(h_5,X,N,N)+(h_4,R,N,N) \\
& +(h_3,X,N,N,N)+(h_2,R,N,N,N)+(g_2,N,N,N,N)
\end{align*}
\end{lemma}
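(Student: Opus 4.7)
The plan is to mirror the proof of Lemma \ref{prop:B25b}: resolve the rightmost crossings of $B1(5,n)$ to obtain a one-step recursion, verify the base cases directly from Section \ref{subsec:5ex}, and then extract the closed formula by induction on $n$. The two base cases $bt_1 = 1$ and $bt_2 = (g_2)$ are immediate from the definitions: $B_1(5,1)$ is a single trivially closed arc, while $B^1(5,2)$ reduces to $T(4,2)$ after the chosen closure (cf. Example \ref{ex:T42}). The cases $bt_3$ and $bt_4$ are computed in Examples \ref{ex:BUp253} and \ref{ex:BUp254} and serve as the first two inductive anchors.

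The key step is to establish the following one-step recursion, valid for $n \geq 5$:
\begin{align*}
bt_{n\text{ odd}} = {} & (h_{n-1}, X) + (h_{n-2}, \widetilde{R}) + (bt_{n-2}, \widetilde{N}), \\
bt_{n\text{ even}} = {} & (h_{n-1}, X) + (h_{n-2}, R) + (bt_{n-2}, N).
\end{align*}
Geometrically, $B1(5,n)$ and $B1(5,n-2)$ differ by an appended pair of columns whose rightmost column carries a bump on the opposite side from where it lies in $B1(5,n-2)$. Smoothing the two crossings in this appended rightmost column yields three distinct outcomes: a \emph{separating} resolution that detaches the two crossings as an independent $X$-block on top of a full rectangular table $T(5,n-1)$, producing $(h_{n-1}, X)$; a \emph{nugatory} resolution that creates a Reidemeister I loop collapsing to $\widetilde{R}$ (for odd $n$) or $R$ (for even $n$), glued onto $T(5,n-2)$; and a \emph{propagation} resolution that recreates a smaller bumpered tangle $B1(5,n-2)$ decorated by $\widetilde{N}$ or $N$. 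The closure conventions fixed in Example \ref{ex:BDown157} and Figure \ref{fig:B1} ensure that no spurious crossings appear at infinity and that the parity alternation between the tilded and untilded variables is consistent. Direct inspection of $bt_5, bt_6, bt_7$ listed in the statement provides immediate consistency checks.

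With the one-step recursion in hand, the induction is routine. Assuming the closed formula for $bt_{n-2}$, multiply each of its terms by $\widetilde{N}$ (odd $n$) or $N$ (even $n$): the two sums index-shift via $i \mapsto i+1$, the terminal block $(S, \widetilde{N}^{(n-5)/2})$ or $(g_2, N^{(n-4)/2})$ becomes $(S, \widetilde{N}^{(n-3)/2})$ or $(g_2, N^{(n-2)/2})$, and the newly added $(h_{n-1}, X)$ and $(h_{n-2}, \widetilde{R})$ (resp. $(h_{n-2}, R)$) terms supply exactly the missing $i = 0$ summands of the two sums. Telescoping gives precisely the claimed closed form.

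The main obstacle will be rigorously establishing the one-step recursion, specifically the combinatorial accounting of the four joint smoothings of the two rightmost crossings. Three of them (those contributing to $X = \delta[A^\pm, A^\pm] + [A^\pm, A^\mp] + [A^\mp, A^\pm]$) must be shown to yield the same underlying rectangular diagram $T(5,n-1)$, while the fourth smoothing bifurcates according to whether a nugatory loop forms (giving the $\widetilde{R}$ or $R$ term attached to a full $T(5,n-2)$) or the bump propagates two columns inward (giving the $\widetilde{N}$ or $N$ term attached to $B1(5,n-2)$). This is delicate because the two parities of $n$ force the bump to appear on opposite sides, reversing the roles of the $A^\pm$ and $A^\mp$ arcs in the relevant smoothings. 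Once this case analysis is carried out, Remark \ref{rem:corners} ensures that no new interior corner pockets create ambiguity, and the recursion and hence the closed formula follow.
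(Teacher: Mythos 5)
Your proposal is correct and follows essentially the same route as the paper: the same base cases ($bt_1$, $bt_2$ from $T(4,1)$ and $T(4,2)$, with $bt_3$, $bt_4$ from Examples \ref{ex:BUp253} and \ref{ex:BUp254}), the identical two-column recursion $bt_{n}=(h_{n-1},X)+(h_{n-2},\widetilde{R}\text{ or }R)+(bt_{n-2},\widetilde{N}\text{ or }N)$ obtained by resolving the last two crossings (three resolutions giving $T(5,n-1)$ and hence the $X$ block, the fourth further resolving into the $h_{n-2}$ and $bt_{n-2}$ pieces), and the same index-shift/telescoping induction. The paper phrases the fourth resolution as passing through an intermediate table with three squares removed, but this is only a cosmetic difference from your "bifurcation" description.
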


\begin{proof}
The case $B_1(5,1)$ is equivalent to $T(4,1)$, and so $bt_1=1$.  The case $B^1(5,2)$ is equivalent to $T(4,2)$, and so we get $bt_2=g_2$.    The actual base cases $B_1(5,3)$ and $B^1(5,4)$ for the induction are handled in Examples \ref{ex:BUp253} and \ref{ex:BUp254}.

Prior to addressing the induction hypothesis, consider the four resolutions on the last two crossings.  Three of them result in a rectangular billiard table of width one less, and the other results in a bumpered billiard table of width one less with three squares removed from the last column.  This last table itself has two resolutions yielding in a rectangular billiard table and a bumpered billiard table, both of width two less than the original.  This is summarized in the following equations:
\begin{align}
bt_{n\text{ odd}}= {} & (h_{n-1},X)+(h_{n-2},A^\pm,f_2^\mp,A^\mp,A^\mp)+(bt_{n-2},A^\mp,f_2^\mp,A^\mp,A^\mp) \text{ and} \\
bt_{n\text{ even}}= {} & (h_{n-1},X)+(h_{n-2},f_2^\mp,A^\pm,A^\mp,A^\mp)+(bt_{n-2},f_2^\mp,A^\mp,A^\mp,A^\mp),
\end{align}
where $X=1-(A^{\pm})^4$ if the crossings are both $\pm$ or else $X=0$, as in Equation \ref{eq:X}.

Suppose the induction hypothesis holds for odd $n$.  Then for odd $n+2$,
\begin{align*}
b_{n+2\text{ odd}}= {} & (h_{n+1},X)+(h_{n},A^\pm,f_2^\mp,A^\mp,A^\mp)+(bt_{n},A^\mp,f_2^\mp,A^\mp,A^\mp) \\
= {} & (h_{n+1},X)+(h_{n},\widetilde{R})+ (S,\widetilde{N}^\frac{n-1}{2}) +\sum_{i=0}^\frac{n-3}{2}(h_{n-1-2i},X,\widetilde{N}^{i+1}) +\sum_{i=0}^\frac{n-5}{2}(h_{n-2-2i},\widetilde{R},\widetilde{N}^{i+1}) \\
= {} &  (S,\widetilde{N}^\frac{n-1}{2}) + \sum_{i=0}^\frac{n-1}{2}(h_{n+1-2i},X,\widetilde{N}^{i}) +\sum_{i=0}^\frac{n-3}{2}(h_{n-2i},\widetilde{R},\widetilde{N}^{i}),
\end{align*}
as desired.

Suppose the induction hypothesis holds for even $n$.  Then for even $n+2$,
\begin{align*}
b_{n+2\text{ even}}= {} & (h_{n-1},X)+(h_{n-2},f_2^\mp,A^\pm,A^\mp,A^\mp)+(bt_{n-2},f_2^\mp,A^\mp,A^\mp,A^\mp) \\
= {} &(h_{n+1},X)+(h_{n},R)+(g_2,N^\frac{n}{2}) +\sum_{i=0}^\frac{n-4}{2}(h_{n-1-2i},X,N^{i+1}) +\sum_{i=0}^\frac{n-4}{2}(h_{n-2-2i},R,N^{i+1})\\
= {} & (g_2,N^\frac{n}{2}) +\sum_{i=0}^\frac{n-2}{2}(h_{n+1-2i},X,N^{i}) +\sum_{i=0}^\frac{n-2}{2}(h_{n-2i},R,N^{i}),
\end{align*}
as desired.
\end{proof}

\begin{remark}
\label{rem:xis0}
Note that some of these terms might disappear because $X=0$ when its two crossings are opposite of each other, as in Equation \ref{eq:X}.
\end{remark}

\begin{question}  This leads to the following open questions:

\begin{enumerate}
	\item Are there natural classes of knots and links for which we have several pairs of opposite crossings, leading to easier formulae?
	\item Instead of a recursion built from the first pair of crossings, could one construct a recursion for a given (more central) pair of crossings, so that there would be more cancellation if these crossings were opposite of each other?
\end{enumerate}
\end{question}

\subsection{Proof of the main theorem}
\label{subsec:proofs}
We conclude this section with the proof.

\begin{proof}
The first few base cases are handled in Examples \ref{ex:T51-2}, \ref{ex:T53}, \ref{ex:T54}, and \ref{ex:T55}.

Prior to addressing the induction hypothesis, consider first the four resolutions on the last two crossings.  One of them results in a rectangular billiard table of width one less, another results in two nugatory crossing that, when resolved, achieves a rectangular billiard table of width two less, and the other two result in bumpered billiard tables of width one less.  This is summarized in the following equations:
\begin{align}
h_{n\text{ odd}}= {} & (h_{n-1},A^\pm,A^\pm)+(h_{n-2},K)+(bt_{n-1},A^\pm,A^\mp)+(b_{n-1},f_2^\mp,\_,A^\mp,A^\pm) \text{ and} \\
h_{n\text{ even}}= {} & (h_{n-1},A^\pm,A^\pm)+(h_{n-2},K)+(bt_{n-1},A^\mp,A^\pm)+(b_{n-1},f_2^\mp,A^\pm,A^\mp),
\end{align}
where for odd $n$, the $f_2^\mp$ is ordered \emph{before the last term} of the $b_{n-1}$ due to placement of the last crossing within the billiard table.

Suppose first that the induction hypothesis holds for odd $n$, for even $n-1$, and for all indexes less than this.  Then for even $n+1$,
\begin{align*}
h_{n+1\text{ even}}= {} & (h_{n},A^\pm,A^\pm)+(h_{n-1},K)+(bt_{n},A^\mp,A^\pm)+(b_{n},f_2^\mp,A^\pm,A^\mp) \\
= {} & (h_{n},A^\pm,A^\pm)+(h_{n-1},K)\\
& +(S,\widetilde{N}^\frac{n-3}{2},A^\mp,A^\pm)+\sum_{i=0}^\frac{n-3}{2}(h_{n-1-2i},X,\widetilde{N}^i,A^\mp,A^\pm) +\sum_{i=0}^\frac{n-5}{2}(h_{n-2-2i},\widetilde{R},\widetilde{N}^i,A^\mp,A^\pm) \\
& +(M,K^{\frac{n-3}{2}},L) +\sum_{i=0}^\frac{n-3}{2}(h_{n-1-2i},A^\pm,K^i,L) +\sum_{i=0}^\frac{n-5}{2}(h_{n-2-2i},L,K^i,L) \\
= {} & (h_{n},A^\pm,A^\pm)+(h_{n-1},K)+(S,\widetilde{N}^\frac{n-3}{2},A^\mp,A^\pm)+(M,K^{\frac{n-3}{2}},L) \\
& +\sum_{i=0}^\frac{n-3}{2}(h_{n-1-2i},[X,\widetilde{N}^i,A^\mp,A^\pm]+[A^\pm,K^i,L]) +\sum_{i=0}^\frac{n-5}{2}(h_{n-2-2i},[\widetilde{R},\widetilde{N}^i,A^\mp,A^\pm]+[L,K^i,L])\\
= {} & (h_{n},P_1)+(h_{n-1},K)+(Q_{n}) \\
&  +\sum_{i=1}^\frac{n-3}{2}(h_{n-1-2i},P'_{2i+2}) + (h_{n-1},[X,A^\mp,A^\pm]+[A^\pm,L]) +\sum_{i=0}^\frac{n-5}{2}(h_{n-2-2i},\widetilde{P}'_{2i+3})\\
\end{align*}
\begin{align*}
= {} & (Q_{n}) +\sum_{j=2}^{n}(h_{j},P_{n+1-j})\\
= {} & (Q_{n}) +(h_{2},P_{n-1}) + (h_{3},P_{n-2}) +\sum_{j=4}^{n}(h_{j},P_{n+1-j})\\
= {} & (Q_{n}) +(h_{2},P_{n-1}) + (h_{3},P_{n-2}) +\sum_{j=4}^{n}\sum_{i=3}^{j-1}([h_3,P_{i-2}]+[h_2,P_{i-1}]+[Q_i],\mathcal{P}_{j-1-i},P_{n+1-j})\\
= {} & (h_{3},P_{n-2}) +(h_{2},P_{n-1}) + (Q_{n}) +\sum_{i=3}^{n-1}([h_3,P_{i-2}]+[h_2,P_{i-1}]+[Q_i],\mathcal{P}_{n-i})\\
= {} & \sum_{i=3}^{n}([h_3,P_{i-2}]+[h_2,P_{i-1}]+[Q_i],\mathcal{P}_{n-i}) = h_{n+1},
\end{align*}
as desired.

Supposing on the other hand that the induction hypothesis holds for even $n$, we obtain a very similar proof for odd $n+1$, omitted here.
\end{proof}

\section{Determining writhe}
\label{sec:writhe}

Often the coefficients of the Kauffman bracket polynomial are enough to determine the Jones polynomial.  We include the following properties, which can be easily checked, without their proofs.

\begin{property}
\label{prop:3writhe}
Consider a knot $T(3,b)$.  
  Then the writhe of $T(3,b+3)$ is given by:
\begin{equation*}
w(T(3,b+3))=
\begin{cases}
w(T(3,b))+(\pm1)+(\mp1)+(\pm1) &\mbox{ when $b\equiv 1 \mod 3$ and}\\
w(T(3,b))+(\pm1)+(\pm1)+(\mp1) &\mbox{ when $b\equiv 2 \mod 3$,}
\end{cases}
\end{equation*}
where the $\pm$ correspond to the signs of the three additional (ordered) crossings.
\end{property}

\begin{property}
\label{prop:5writhe}
Consider a knot $T(5,b)$.  
  Then the writhe of $T(5,b+5)$ is given by $w(T(5,b+5))=$
\begin{equation*}
\begin{cases}
w(T(5,b))+(\pm1)+(\pm1)+(\mp1)+(\pm1) & \\
\hspace{5mm} +(\mp1)+(\mp1)+(\mp1)+(\pm1)+(\pm1)+(\pm1) &\mbox{ when $b\equiv 1 \mod 5$ and $2|b$, }\\
w(T(5,b))+(\pm1)+(\pm1)+(\pm1)+(\mp1) & \\
\hspace{5mm} +(\mp1)+(\mp1)+(\pm1)+(\mp1)+(\pm1)+(\pm1) &\mbox{ when $b\equiv 1 \mod 5$ and $2 \nmid b$, }\\
w(T(5,b))+(\pm1)+(\pm1)+(\pm1)+(\mp1) & \\
\hspace{5mm} +(\mp1)+(\pm1)+(\pm1)+(\pm1)+(\mp1)+(\mp1) &\mbox{ when $b\equiv 2 \mod 5$ and $2| b$, }\\
w(T(5,b))+(\pm1)+(\pm1)+(\mp1)+(\pm1) & \\
\hspace{5mm} +(\pm1)+(\mp1)+(\pm1)+(\pm1)+(\mp1)+(\mp1) &\mbox{ when $b\equiv 2 \mod 5$ and $2 \nmid b$, }\\
w(T(5,b))+(\pm1)+(\pm1)+(\mp1)+(\mp1) & \\
\hspace{5mm} +(\pm1)+(\pm1)+(\pm1)+(\mp1)+(\mp1)+(\pm1) &\mbox{ when $b\equiv 3 \mod 5$ and $2| b$, }\\
w(T(5,b))+(\pm1)+(\pm1)+(\mp1)+(\mp1) & \\
\hspace{5mm} +(\pm1)+(\pm1)+(\mp1)+(\pm1)+(\pm1)+(\mp1) &\mbox{ when $b\equiv 3 \mod 5$ and $2 \nmid b$, }\\
w(T(5,b))+(\pm1)+(\pm1)+(\pm1)+(\pm1) & \\
\hspace{5mm} +(\pm1)+(\mp1)+(\mp1)+(\mp1)+(\pm1)+(\mp1) &\mbox{ when $b\equiv 4 \mod 5$ and $2| b$, }\\
w(T(5,b))+(\pm1)+(\pm1)+(\pm1)+(\pm1) & \\
\hspace{5mm} +(\mp1)+(\pm1)+(\mp1)+(\mp1)+(\mp1)+(\pm1) &\mbox{ when $b\equiv 4 \mod 5$ and $2 \nmid b$, }
\end{cases}
\end{equation*}
where the $\pm$ correspond to the signs of the ten additional (ordered) crossings.
\end{property}

Observe that the cases of even and odd $b$ within each equivalence class simply interchange the terms within each pair, similar to other behavior earlier in the paper like $N$ and $\widetilde{N}$, $R$ and $\widetilde{R}$.

\newcommand{\etalchar}[1]{$^{#1}$}
\def\cprime{$'$}
\providecommand{\bysame}{\leavevmode\hbox to3em{\hrulefill}\thinspace}
\providecommand{\MR}{\relax\ifhmode\unskip\space\fi MR }
\providecommand{\MRhref}[2]{%
  \href{http://www.ams.org/mathscinet-getitem?mr=#1}{#2}
}
\providecommand{\href}[2]{#2}


\begin{thebibliography}{DFKLS08}

\bibitem[BDHZ09]{BDHZ}
Adam Boocher, Jay Daigle, Jim Hoste, and Wenjing Zheng, \emph{Sampling
  {L}issajous and {F}ourier knots}, Experiment. Math. \textbf{18} (2009),
  no.~4, 481--497.

\bibitem[BEJS10]{BEJS}
Arthur~T. {Benjamin}, Larry {Ericksen}, Pallavi {Jayawant}, and Mark
  {Shattuck}, \emph{{Combinatorial trigonometry with Chebyshev polynomials}},
  {J. Stat. Plann. Inference} \textbf{140} (2010), no.~8, 2157--2160.

\bibitem[BHJS94]{BHJS}
M.~G.~V. Bogle, J.~E. Hearst, V.~F.~R. Jones, and L.~Stoilov, \emph{Lissajous
  knots}, J. Knot Theory Ramifications \textbf{3} (1994), no.~2, 121--140.

\bibitem[BHS09]{BHS}
Matthias {Beck}, Christian {Haase}, and Steven~V {Sam}, \emph{{Grid graphs,
  Gorenstein polytopes, and domino stackings}}, {Graphs Comb.} \textbf{25}
  (2009), no.~4, 409--426.

\bibitem[{Bir}85]{Bir:3braid}
Joan~S. {Birman}, \emph{{On the Jones polynomial of closed 3-braids}},
  {Invent. Math.} \textbf{81} (1985), 287--294.

\bibitem[BNMea]{knotatlas}
Dror Bar-Natan, Scott Morrison, and et~al, \emph{The {K}not {A}tlas},
  http://katlas.org.

\bibitem[BW09]{BenWal}
Arthur~T. {Benjamin} and Daniel {Walton}, \emph{{Counting on Chebyshev
  polynomials}}, {Math. Mag.} \textbf{82} (2009), no.~2, 117--126.

\bibitem[CDR14]{CoDaRu}
Moshe {Cohen}, Oliver~T. {Dasbach}, and Heather~M. {Russell}, \emph{{A twisted
  dimer model for knots}}, {Fundam. Math.} \textbf{225} (2014), no.~1, 57--74.

\bibitem[{Coh}12]{Co:jones}
Moshe {Cohen}, \emph{{A determinant formula for the Jones polynomial of pretzel
  knots}}, {J. Knot Theory Ramifications} \textbf{21} (2012), no.~6, 1250062,
  23.

\bibitem[Com97]{Comstock}
Elting~H. Comstock, \emph{The real singularities of harmonic curves of three
  frequencies}, Transactions of the Wisconsin Academy of Sciences, Arts and
  Letters \textbf{XI} (1896-1897), 452--464.

\bibitem[Cro04]{Crom}
Peter~R. Cromwell, \emph{Knots and links}, Cambridge University Press,
  Cambridge, 2004.

\bibitem[CT12]{Co:clock}
Moshe Cohen and Mina Teicher, \emph{Kauffman's clock lattice as a graph of
  perfect matchings: a formula for its height}, arXiv:1211.2558, 2012.

\bibitem[{deW}97]{deW}
Benjamin~M.M. {de Weger}, \emph{{Padua and Pisa are exponentially far apart}},
  {Publ. Mat., Barc.} \textbf{41} (1997), no.~2, 631--651.

\bibitem[DFKLS08]{DaFuKaLiSt}
Oliver~T. Dasbach, David Futer, Efstratia Kalfagianni, Xiao-Song Lin, and
  Neal~W. Stoltzfus, \emph{The {J}ones polynomial and graphs on surfaces}, J.
  Combin. Theory Ser. B \textbf{98} (2008), no.~2, 384--399.

\bibitem[DS10]{DuzShk}
Sergei Duzhin and Mikhail Shkolnikov, \emph{A formula for the {HOMFLY}
  polynomial of rational links}, arXiv:1009.1800, 2010.

\bibitem[{Eul}05]{Eul:grid}
Reinhardt {Euler}, \emph{{The Fibonacci number of a grid graph and a new class
  of integer sequences}}, {J. Integer Seq.} \textbf{8} (2005), no.~2, art.
  05.2.6, 16.


\bibitem[GJ14]{GuJo}
Jie Gu and Hans Jockers, \emph{A note on colored {HOMFLY} polynomials for
  hyperbolic knots from {WZW} models}, arXiv:1407.5643v1, 2014.

\bibitem[HZ07]{HosZir}
Jim Hoste and Laura Zirbel, \emph{Lissajous knots and knots with {L}issajous
  projections}, Kobe J. Math. \textbf{24} (2007), no.~2, 87--106.

\bibitem[JP98]{JonPrz}
Vaughan F.~R. Jones and J{\'o}zef~H. Przytycki, \emph{Lissajous knots and
  billiard knots}, Knot theory ({W}arsaw, 1995), Banach Center Publ., vol.~42,
  Polish Acad. Sci. Inst. Math., Warsaw, 1998, pp.~145--163.

\bibitem[{Kan}89]{Kan2}
Taizo {Kanenobu}, \emph{{Examples on polynomial invariants of knots and links
  II}}, {Osaka J. Math.} \textbf{26} (1989), no.~3, 465--482.

\bibitem[{Kan}90]{Kan1}
\bysame, \emph{{Jones and Q polynomials for 2-bridge knots and links}}, {Proc.
  Am. Math. Soc.} \textbf{110} (1990), no.~3, 835--841.

\bibitem[Kau87]{KauffmanBracket}
Louis~H. Kauffman, \emph{State models and the {J}ones polynomial}, Topology
  \textbf{26} (1987), no.~3, 395--407.

\bibitem[Ken09]{Ken}
Richard Kenyon, \emph{Lectures on dimers}, Statistical mechanics, IAS/Park City
  Math. Ser., vol.~16, Amer. Math. Soc., Providence, RI, 2009, pp.~191--230.

\bibitem[KP80]{KlaPol}
David {Klarner} and Jordan {Pollack}, \emph{{Domino tilings of rectangles with
  fixed width}}, {Discrete Math.} \textbf{32} (1980), 45--52.

\bibitem[KP10]{PVK:fibknots}
Pierre-Vincent {Koseleff and} Daniel {Pecker}, \emph{{On Fibonacci knots}}, {Fibonacci Q.}
  \textbf{48} (2010), no.~2, 137--143.

\bibitem[KP11a]{KosPec4}
\bysame, \emph{Chebyshev diagrams for two-bridge knots},
  Geom. Dedicata \textbf{150} (2011), 405--425.

\bibitem[KP11b]{KosPec3}
\bysame, \emph{Chebyshev knots}, J. Knot Theory Ramifications \textbf{20}
  (2011), no.~4, 575--593.

\bibitem[KP12]{KosPec:Harm}
\bysame, \emph{Harmonic Knots}, arXiv:1203.4376, 2012.

\bibitem[KP15]{PVK:fib}
\bysame, \emph{On {A}lexander-{C}onway
  polynomials of two-bridge links}, J. Symb. Comput. (2015).

\bibitem[Lam97]{Lam}
Christoph Lamm, \emph{There are infinitely many {L}issajous knots}, Manuscripta
  Math. \textbf{93} (1997), no.~1, 29--37.

\bibitem[Lam99]{Lam:dis}
\bysame, \emph{Zylinder-{K}noten und symmetrische {V}ereinigungen}, Bonner
  Mathematische Schriften [Bonn Mathematical Publications], 321, Universit\"at
  Bonn Mathematisches Institut, Bonn, 1999, Dissertation, Rheinische
  Friedrich-Wilhelms-Universit{\"a}t Bonn, Bonn, 1999.

\bibitem[Lew14]{Lew:dis}
Sam Lewallen, \emph{Floerg{\r{a}}sbord}, arXiv:407.0769v1, 2014.

\bibitem[LLS09]{LeeLeeSeo}
Eunju {Lee}, Sang~Youl {Lee}, and Myoungsoo {Seo}, \emph{{A recursive formula
  for the Jones polynomial of 2-bridge links and applications}}, {J. Korean
  Math. Soc.} \textbf{46} (2009), no.~5, 919--947.

\bibitem[LM87]{LicMil}
W.B.R. {Lickorish} and Kenneth~C. {Millett}, \emph{{A polynomial invariant of
  oriented links}}, {Topology} \textbf{26} (1987), 107--141.

\bibitem[LP86]{LovPlum}
L.~Lov{\'a}sz and M.~D. Plummer, \emph{Matching theory}, North-Holland
  Mathematics Studies, vol. 121, North-Holland Publishing Co., Amsterdam, 1986,
  Annals of Discrete Mathematics, 29.

\bibitem[LZ10]{LuZho}
Bin {Lu} and Jianyuan~K. {Zhong}, \emph{{An algorithm to compute the Kauffman
  polynomial of 2-bridge knots}}, {Rocky Mt. J. Math.} \textbf{40} (2010),
  no.~3, 977--993.



\bibitem[{Mas}06]{Masur:erg}
Howard {Masur}, \emph{{Ergodic theory of translation surfaces}}, {Handbook of
  dynamical systems. Volume 1B}, Amsterdam: Elsevier, 2006, pp.~527--547.


\bibitem[MT02]{MasTab:bill}
Howard {Masur} and Serge {Tabachnikov}, \emph{{Rational billiards and flat
  structures}}, {Handbook of dynamical systems. Volume 1A}, Amsterdam:
  North-Holland, 2002, pp.~1015--1089.



\bibitem[{Nak}00]{Nak1}
Shigekazu {Nakabo}, \emph{{Formulas on the HOMFLY and Jones polynomials of
  2-bridge knots and links}}, {Kobe J. Math.} \textbf{17} (2000), no.~2,
  131--144.

\bibitem[{Nak}02]{Nak2}
\bysame, \emph{{Explicit description of the HOMFLY polynomials for 2-bridge
  knots and links}}, {J. Knot Theory Ramifications} \textbf{11} (2002), no.~4,
  565--574.

\bibitem[OEI]{OEIS931}
\emph{The {O}n-{L}ine {E}ncyclopedia of {I}nteger {S}equences},
  http://oeis.org, 2014, Sequence A000931.

\bibitem[QYAQ14]{QYAQ}
Khaled Qazaqzeh, Moh'd Yasein, and Majdoleen Abu-Qamar, \emph{The {J}ones
  polynomial of rational links}, arXiv:1406.4339, 2014.

\bibitem[{Rea}80]{Rea}
Ronald~C. {Read}, \emph{{A note on tiling rectangles with dominoes}},
  {Fibonacci Q.} \textbf{18} (1980), 24--27.

\bibitem[{Sta}85]{Sta:rect}
Richard~P. {Stanley}, \emph{{On dimer coverings of rectangles of fixed
  width}}, {Discrete Appl. Math.} \textbf{12} (1985), 81--87.

\bibitem[{Sto}00]{Sto}
Alexander {Stoimenow}, \emph{{Rational knots and a theorem of Kanenobu}},
  {Exp. Math.} \textbf{9} (2000), no.~3, 473--478.

\bibitem[Thi87]{Th}
Morwen~B. Thistlethwaite, \emph{A spanning tree expansion of the {J}ones
  polynomial}, Topology \textbf{26} (1987), no.~3, 297--309.

\end{thebibliography}
\end{document}